\newtcolorbox{commentbox}[1][]{colback=yellow!10!white, colframe=red!50!black, title=#1}
\DeclareSymbolFont{cyrletters}{OT2}{wncyr}{m}{n}
\DeclareMathSymbol{\Sha}{\mathalpha}{cyrletters}{"58}
\newtheorem{theorem}{Theorem}[section]
\newtheorem{fact}[theorem]{Fact}
\newtheorem{lemma}[theorem]{Lemma}
\newtheorem{proposition}[theorem]{Proposition}
\newtheorem{corollary}[theorem]{Corollary}
\newtheorem{definition}[theorem]{Definition}
\newtheorem{remark}[theorem]{Remark}
\newtheorem{example}[theorem]{Example}
\newtheorem{notation}[theorem]{Notation}
\newtheorem{question}[theorem]{Question}
\newtheorem{algorithm}[theorem]{Algorithm}
\author{Sa'ar Zehavi}
\title{Modular Chabauty: Effective $S$-Integral Point Computation On Curves with Elliptic Fibrations}
\begin{document}

\maketitle

\begin{abstract}
We present a practical, unconditional algorithm for determining the $S$-integral points on any \emph{elliptic moduli problem} $\mathcal Y/\mathbb Z[1/S]$---that is, on any geometrically connected curve carrying a non-isotrivial elliptic fibration $\pi:\mathcal E\to\mathcal Y$. The associated map $\Phi_{\mathcal M}:\mathcal Y\to\mathcal M_{1,1}$ (the \emph{modular period map}) plays the role ordinarily filled by a $p$-adic period map in Chabauty--type methods. 

Our \emph{Modular Chabauty} method studies the image and fibres of $\Phi_{\mathcal M}$, and proceeds in two steps: an Effective Shafarevich step, in which we combine the modularity theorem with Cremona’s enumeration of elliptic curves by conductor, and list \emph{all} rational elliptic curves with good reduction outside~$S$; and a Fibre Computation step, in which we compute the $S$-integral points in the corresponding fibre of~$\Phi_{\mathcal M}$.

A Python/Sage implementation computes $\mathcal Y(\mathbb Z[1/S])$ for \(\mathcal Y=\mathbb{P}^1\!\setminus\!\{0,1,\infty\}\) and for every modular curve $\mathcal Y_1(N)$ with \(4\le N\le10\) or \(N=12\), for all sets \(S\) with \(\prod_{p\in S}p^{2}\le5\!\cdot\!10^{5}\), within $3.5$ seconds on a standard computer.
\end{abstract}

\tableofcontents

\section{Introduction}
\label{chapter:intro}
Let $K$ be a number field and $S$ a finite set of places of $K$ containing all archimedean places. Denote by $\mathcal{O}_{K,S}$ the ring of $S$-integers in $K$. A classic corollary of Siegel~\cite{SIEGEL} and Faltings~\cite{FALTINGS} famous theorems reads: if a curve is \emph{hyperbolic}---i.e.\ it has \emph{negative Euler characteristic}---then its $S$-integral points form a \emph{finite} set. The open challenge is \textbf{effectivity}: 
\begin{question}
Given a hyperbolic curve
\(
\mathcal{Y}/\mathcal{O}_{K,S},
\)
\emph{compute} the set
\(
\mathcal{Y}(\mathcal{O}_{K,S}).
\)  
\end{question}
Here, "compute" means providing an algorithm that computes $\mathcal{Y}(\mathcal{O}_{K,S})$ from the data $(K, S, \mathcal{Y})$ in finite time, and preferably efficiently.

For an integer $g > 0$, let $\mathcal{M}_{g}$ denote the moduli stack of principally polarized $g$-dimensional abelian varieties. Its $\mathcal{O}_{K,S}$-points classify abelian varieties of dimension $g$ defined over $K$ with good reduction outside $S$. The Shafarevich Conjecture, now a theorem of Faltings, asserts that for any fixed $K$, $S$, and $g$, the set $\mathcal{M}_{g}(\mathcal{O}_{K,S})$ is finite. Parshin~\cite{PARSHIN} first recognized a connection between the Shafarevich and Mordell conjectures, subsequently exploited by Faltings~\cite{FALTINGS} in proving Mordell's conjecture. A simplified outline of the idea is as follows: Given a $g'$-dimensional relative abelian variety $\pi: \mathcal{X}\to \mathcal{Y}$ over a smooth geometrically connected $\mathcal{O}_{K,S}$-scheme $\mathcal{Y}$, we obtain a map:
\[
\Phi(\mathcal{O}_{K,S}): \mathcal{Y}(\mathcal{O}_{K,S}) \to \mathcal{M}_{g'}(\mathcal{O}_{K,S}), \quad t\mapsto \pi^{-1}(t).
\]
From the Shafarevich Conjecture, it follows immediately that $\Phi(\mathcal{O}_{K,S})$ has finite image. Under favorable conditions, such as when $\pi$ is a Kodaira--Parshin fibration, Faltings proved that $\Phi$ has finite fibres, hence $\mathcal{Y}(\mathcal{O}_{K,S})$ is finite.

The \emph{Effective Shafarevich Problem} then asks:
\begin{question}
Given $g, K$, and $S$ as above, explicitly compute $\mathcal{M}_{g}(\mathcal{O}_{K,S})$.
\end{question}
Effective solutions to the Shafarevich Problem lead to effective solutions of the Faltings--Siegel Problem through Kodaira--Parshin families, as demonstrated by~\cite{JAVANPEYKAR, ALPOGE, KANEL_MATSCHKE, VON_KANEL}. Consider a smooth hyperbolic curve $\mathcal{Y}$ equipped with a Kodaira--Parshin fibration $\mathcal{X}\to \mathcal{Y}$ defined over $\mathcal{O}_{K,S}$. Assume for simplicity that $\mathcal{X}\to \mathcal{Y}$ is a relative abelian variety of dimension $g'>0$. A canonical example occurs when $\mathcal{Y}$ is a modular curve and $\mathcal{X}$ is the universal elliptic curve. Denote the induced morphism to the moduli stack $\mathcal{M}_{g'}$ by $\Phi_{\mathcal{M}}: \mathcal{Y}\to \mathcal{M}_{g'}$, which we call the \emph{Modular Period Map}.

\begin{definition}
An $S$-integral elliptic moduli problem is a pair $(\mathcal{Y}, \Phi_{\mathcal{M}_{1,1}})$, consisting of a geometrically connected curve $\mathcal{Y}/\mathcal{O}_{K,S}$, and a non-constant map $\Phi_{\mathcal{M}_{1,1}}: \mathcal{Y}\to \mathcal{M}_{1,1}$, to the moduli stack of elliptic curves, or, equivalently, a non-isotrivial elliptic fibration $\mathcal{E}\to \mathcal{Y}$. 
\end{definition}

\begin{remark}
Elliptic moduli problems only ever exist over affine curves, since if $\mathcal{E}\to \mathcal{Y}$ is a non-isotrivial elliptic fibration of a projective curve, then composing $\Phi_{\mathcal{M}_{1,1}}: \mathcal{Y}\to \mathcal{M}_{1,1}$ with the $j$-map, $j: \mathcal{M}_{1,1}\to \mathbb{A}^1$ gives a non-constant global function on $\mathcal{Y}$, a contradiction.
\end{remark}

\begin{remark}
In the case of an elliptic moduli problem, we shall refer to $\Phi_{\mathcal{M}_{1,1}}$ as the Modular Period Map, and denote it by $\Phi_{\mathcal{M}}$.
\end{remark}

Analogous to the Chabauty--Coleman, Chabauty--Kim, and Lawrence--Venkatesh methods, which study the image and fibres of certain $p$-adic period maps, \emph{Modular Chabauty} is a two-step strategy analyzing the image and fibres of the Modular Period Map:
\begin{enumerate}
    \item \textbf{Effective Shafarevich Step}: Compute $\mathcal{M}_{g'}(\mathcal{O}_{K,S})$ explicitly.
    \item \textbf{Fibre Computation Step}: For each $\mathcal{A}_{g'}\in \mathcal{M}_{g'}(\mathcal{O}_{K,S})$, explicitly determine $\Phi^{-1}_{\mathcal{M}}(\mathcal{A}_{g'})(\mathcal{O}_{K,S})$.
\end{enumerate}
In this paper, we implement this strategy for \emph{elliptic moduli problems} over open $\mathrm{Spec}\,\mathbb{Z}$-subschemes. In that setting the Effective Shafarevich step can be resolved by invoking the full \emph{Modularity Theorem} for elliptic curves over $\mathbb{Q}$~\cite{WILES, TAYLOR-WILES, BCDT} (see Theorem~\ref{theorem:modularity}), in combination with Cremona's algorithms~\cite{CREMONA} for modular elliptic curves. While discovered independently by the author, an essentially identical approach to the Effective Shafarevich step previously appeared in the work of von Känel and Matschke~\cite{KANEL_MATSCHKE} with application to various elliptic moduli problems. While our algorithm synthesizes known individual ideas, our originality lies in introducing an effective, conceptually simple, and universally applicable fibre computation step. We also introduce a moduli-theoretic approach in the fibre computation step for modular curves, providing a model-independent description of their $S$-integral points. This may be especially useful to those aiming to prove the (in)existence of such points.

Our main contribution is a practical and explicit algorithm (Algorithm~\ref{algorithm:mcemp}), efficiently computing $\mathcal{Y}(\mathbb{Z}[1/S])$ for elliptic moduli problems. In \S~\ref{chapter:tables}, we apply this method to the thrice-punctured line $\mathbb{P}^1\setminus\{0,1,\infty\}$ and modular curves with $\Gamma_1(N)$-level structure for $4\le N\le 10$ and $N=12$\footnote{Note that $\mathcal{Y}_1(N)(\mathbb{Q}) = \emptyset$ for $N$ outside this range as a corollary to Mazur's Torsion Theorem~\cite[Theorem 2]{MAZUR_ISOGENY}.}. Our Python-Sage implementation~\cite{MY_IMPLEMENTATION} computes these solutions within a few seconds on a standard personal computer, subject to $\prod_{p\in S}p^2 \lesssim 5\cdot 10^5$.

We view Modular Chabauty as part of a broader effort toward systematically addressing the Effective Faltings--Siegel Problem via modularity, supported by recent advances such as those by Alpöge~\cite{ALPOGE}.

\subsection{Related Work}
\label{section:related_work}
Proofs of Diophantine finiteness using Diophantine approximation methods, dating back to Siegel's original approach~\cite{SIEGEL}, typically rely on variants of Baker's method. Such techniques yield explicit height bounds for $S$-integral points for some affine hyperbolic curves (e.g. in the genus $0, 1$ cases), theoretically enabling exhaustive search. However, the numerical bounds derived from these methods are usually impractically large, rendering such computations infeasible. For an extensive survey on Diophantine approximation methods relevant to the Effective Siegel Problem, see Bilu~\cite{BILU}.

The idea of studying modular period maps and their fibres as a means to establish Diophantine finiteness traces back to Parshin~\cite{PARSHIN}. Parshin's vision was realized by Faltings~\cite{FALTINGS}, who reduced the Mordell conjecture to the Shafarevich conjecture and proved the latter (hence proving both). Although Faltings' original proof was non-effective, subsequent developments~\cite{JAVANPEYKAR, ALPOGE, KANEL_MATSCHKE, VON_KANEL} demonstrated that effective solutions to the Shafarevich Problem imply effective solutions to the Faltings--Siegel Problem. Von Känel~\cite{VON_KANEL} notably utilized modularity results to establish explicit height bounds on several families of abelian varieties over $\mathrm{Spec}\, \mathbb{Z}[1/S]$, from which he produced explicit but typically large height bounds on $S$-integral points for elliptic moduli problems. Despite improvement over earlier bounds (e.g., those given by Bilu~\cite{BILU}), these remain too large for practical computation (via exhaustive search).

Our Effective Shafarevich step, based on modularity results and Cremona's algorithms, though discovered independently by the author, previously appeared in the work of von Känel and Matschke. In~\cite{KANEL_MATSCHKE}, von Känel and Matschke introduced two practical algorithms for explicitly computing $S$-integral points on particular families of elliptic moduli problems, such as cubic Thue/Thue--Mahler, Mordell, Ramanujan--Nagell, and the thrice-punctured line. Their first algorithm can be viewed as a form of \emph{Modular Chabauty}, consisting of two steps: (i) an Effective Shafarevich step identical to ours, and (ii) a case-by-case analysis for the fibre computation step. In view of this, our method may be naturally regarded as building upon their modularity-based framework. The key difference between our methods lies in our Fibre Computation step: we replace their specialized, model-dependent analyses with a conceptually simple approach, valid for any elliptic moduli problem. In the special case of modular curves, we introduce a model-independent, moduli-theoretic description of $S$-integral points, which seems advantageous for theoretical investigations into the (in)existence of such points.

Von Känel and Matschke's second algorithm, based on explicit height bounds combined with a sieve method originally introduced by de Weger~\cite{de_WEGER}, currently holds the computational world record for solving the $S$-unit equation for the largest set of primes $S(16)$ (consisting of the first $16$ primes). According to~\cite[\S 3.2.6]{KANEL_MATSCHKE}, this record computation required 34 days on 30 CPUs. In comparison, our implementation currently handles sets $S$ subject to the constraint $\prod_{p\in S}p^2 \lesssim 5\cdot 10^5$. This limitation arises since (i) the elliptic curves identified in our Effective Shafarevich step have conductors roughly bounded by $\prod_{p\in S}p^2$, and (ii) Cremona's tables~\cite{CREMONA_TABLES}, which we rely on, store isomorphism classes of rational elliptic curves up to conductor $5\cdot 10^5$. Within this practical constraint, our Python-Sage implementation~\cite{MY_IMPLEMENTATION} efficiently solves the $S$-unit equation for sets of cardinality at most $3$; for example, finding all $39$ solutions for $S = {2,3,23}$ in approximately $2.3$ seconds on a personal computer. Extending our method to sets $S$ of cardinality $4$ would require Cremona tables roughly $100$ times larger, which currently do not exist. To handle significantly larger sets, such as $S(16)$ or beyond, one would need a dynamic implementation of the Effective Shafarevich step, generating Cremona's tables only for relevant conductors as needed. Exploring such an implementation and comparing its performance with von Känel and Matschke's sieving algorithm would be an interesting avenue for future work.

Another prominent theoretical approach to Diophantine finiteness is Kim's non-abelian Chabauty method~\cite{KIM2}. This method employs $p$-adic period maps and has achieved notable theoretical successes, including solving the Effective Faltings--Siegel Problem conditionally upon the Bloch–Kato conjecture. Successful unconditional implementations include computations for certain curves such as the famous "cursed curve", $X_{\mathrm{ns}}(13)/\mathbb{Q}$~\cite{BALAKRISHNAN_DOGRA}. However, state-of-the-art implementations of the Motivic Chabauty--Kim method~\cite{DCW,PLLG,REFINED_SELMER}, a particular variation of Kim's method geared towards solving the $S$ unit equation, remains practically constrained to sets $S$ of cardinality at most $2$, while our method effectively handles sets with cardinality $3$ within seconds.

The Lawrence--Venkatesh method~\cite{LV} offers an alternative $p$-adic analytic approach, combining aspects of Chabauty--Kim theory and Kodaira--Parshin families. Despite its algorithmic potential and significant theoretical contributions---reproving both Faltings' and Siegel's theorems explicitly---no practical computational implementation for explicit curves has yet emerged. A discussion of computational challenges in both Kim’s and Lawrence--Venkatesh’s methods is given by Balakrishnan~\cite{BALAKRISHNAN_SURVEY} et al.

Finally, Kantor~\cite{KANTORTHESIS} proposed a program to unify the Chabauty--Kim and Lawrence--Venkatesh methods into a common theoretical framework, subsequently expanded by Corwin and the author in~\cite{CKK_CZ}. While theoretically appealing, this unified approach has not yet resulted in practical computational implementations comparable to those described above.

\subsection{Plan of Paper}
\S\ref{chapter:modular_chabauty_for_emps} provides a rigorous description of our algorithm (see Algorithm~\ref{algorithm:mcemp}). \S\ref{section:effective_shafarevich} describes our approach to the Effective Shafarevich step, and \S\ref{section:fibre_computation} entails our approach to the Fibre Computation step. More specifically, we explain our universal approach in \S\ref{section:general_strategy}, and our moduli theoretic approach in \S\ref{section:modular_strategy}.

\S\ref{chapter:tables} records the numerical outputs of Algorithm~\ref{algorithm:mcemp} (runtime on the author's laptop, and point count) for the $S$-unit equation, and all modular curves of the form $\mathcal{Y}_1(N)$ with $N\in \{4,5,6,7,8,9,10,12\}$, subject to $\prod_{p\in S}p^2\lesssim 5\cdot 10^5$.

\section{Modular Chabauty for Elliptic Moduli Problems}
\label{chapter:modular_chabauty_for_emps}
In \S\ref{chapter:intro} we outlined the \emph{Modular Chabauty} paradigm: given a hyperbolic curve $\mathcal Y/\mathcal O_{K,S}$ endowed with a non-isotrivial abelian fibration $\pi: \mathcal{X}\to\mathcal{Y}$, one studies the image and fibres of the associated modular period map to control the set $\mathcal Y(\mathcal O_{K,S})$. The present chapter specialises this strategy to \emph{elliptic moduli problems} over $\mathbb Z[1/S]$ and turns it into an explicit algorithm.

Our main result is the following two--step procedure.

\begin{algorithm}[Modular Chabauty for elliptic moduli problems]
\label{algorithm:mcemp}
${}$\\
\begin{itemize}
    \item \textbf{Input:} an $S$--integral elliptic moduli problem $(\mathcal Y,\Phi_{\mathcal M})$.
    \item \textbf{Output:} the finite set $\mathcal Y(\mathbb Z[1/S])$.
\end{itemize}
The algorithm proceeds via:
\begin{enumerate}
  \item[1.] \textbf{Effective Shafarevich.} List all $\mathbb Q$--isomorphism classes of elliptic curves with good reduction outside $S$.
  \item[2.] \textbf{Fibre Computation.} For each curve $E$ from Step 1 compute the $S$--integral fibre $\Phi_{\mathcal M}^{-1}(E)(\mathbb Z[1/S])\subseteq\mathcal Y(\mathbb Z[1/S])$.
\end{enumerate}
\end{algorithm}

Step 1 (\S\ref{section:effective_shafarevich}) reduces to Cremona’s algorithms for modular elliptic curves and relies on the Modularity Theorem; this dependence is the main reason we restrict to $\mathbb Z[1/S]$ rather than $\mathcal O_{K,S}$. Step 2 (\S\ref{section:fibre_computation}) is carried out over an arbitrary ring of $S$-integers and is logically independent of the implementation of Step 1.

\begin{remark}
A Python/Sage implementation of Algorithm~\ref{algorithm:mcemp} may be found on~\cite{MY_IMPLEMENTATION}.
\end{remark}

\subsection{The Effective Shafarevich Step}
\label{section:effective_shafarevich}
The main reference here is Cremona’s foundational work~\cite{CREMONA}. Given a positive integer $d$, Cremona’s produces representatives of all $\mathbb{Q}$-isomorphism classes of elliptic curves of conductor~$d$ in two stages:
\begin{enumerate}
  \item[(i)] enumerate the $\mathbb Q$-isogeny classes of elliptic curves with conductor~$d$;
  \item[(ii)] within each class, list the distinct $\mathbb Q$\,--isomorphism classes.
\end{enumerate}
Stage\,(i) relies on the Modularity Theorem (Theorem~\ref{theorem:modularity}); stage\,(ii) employs Vélu’s formulas (Theorem~\ref{theorem:velu}) together with Mazur’s finiteness theorem for rational isogenies (Theorem~\ref{theorem:mazur_finiteness}). In practice we invoke Cremona’s algorithms only for the finitely many divisors of a single integer $N(S)$, defined next.

\begin{fact}[{\cite[p.~32]{VON_KANEL}}]\label{fact:finiteness_conductor}
Let $S$ be a finite set of rational primes and let $E/\mathbb Q$ be an elliptic curve with good reduction outside~$S$.  Writing $N_E$ for its conductor, one has $N_E\mid N(S)$ with
\[
  N(S)=c_2(S)\,c_3(S)\prod_{p\in S} p^{2}, \qquad
  c_2(S)=\begin{cases}8 & 2\in S,\\1 & 2\notin S,\end{cases}\;
  c_3(S)=\begin{cases}3 & 3\in S,\\1 & 3\notin S.\end{cases}
\]
\end{fact}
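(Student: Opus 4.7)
The plan is to reduce the global divisibility $N_E \mid N(S)$ to a prime-by-prime bound on the local conductor exponent $f_p(E)$. Since the conductor factors as $N_E = \prod_p p^{f_p(E)}$ and the hypothesis that $E$ has good reduction outside $S$ is precisely $f_p(E) = 0$ for every $p \notin S$, it suffices to establish the local bound $f_p(E) \leq v_p(N(S))$ for each $p \in S$. Unpacking the definition of $N(S)$, the required inequalities are $f_p(E) \leq 2$ for every $p \geq 5$, $f_3(E) \leq 3$, and $f_2(E) \leq 5$.

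For $p \geq 5$, I would invoke the $\ell$-adic Galois-theoretic description of the local conductor. Fixing any prime $\ell \neq p$ and setting $V_\ell = T_\ell(E) \otimes_{\mathbb{Z}_\ell} \mathbb{Q}_\ell$, one has
\[
f_p(E) = \dim\bigl(V_\ell / V_\ell^{I_p}\bigr) + \mathrm{Sw}_p(V_\ell),
\]
with $I_p$ the inertia subgroup at $p$ and $\mathrm{Sw}_p$ the Swan conductor. The first summand is at most $\dim V_\ell = 2$. The image of $I_p$ in $\mathrm{Aut}(V_\ell)$ factors through the Galois group of the minimal extension of $\mathbb{Q}_p$ over which $E$ acquires semistable reduction, whose order divides $24$; for $p \geq 5$ this order is coprime to $p$, so wild inertia acts trivially and $\mathrm{Sw}_p(V_\ell) = 0$, yielding $f_p(E) \leq 2$.

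The main obstacle is the residue characteristics $p \in \{2, 3\}$, where wild inertia genuinely contributes and the preceding tameness argument fails. My plan is to combine Ogg's conductor-discriminant formula
\[
f_p(E) = v_p(\Delta_{\min}(E)) + 1 - m_p(E),
\]
where $m_p(E)$ is the number of irreducible components of the special fibre of the Néron model, with an explicit enumeration of Kodaira types via Tate's algorithm; each Kodaira type carries a finite list of possible conductor exponents, and the claimed bounds $f_3(E) \leq 3$ and $f_2(E) \leq 5$ are obtained by maximising across the allowed types. In practice one may quote Papadopoulos' classification of Néron models in residue characteristic $2$ and $3$, or simply cite the computation in von Känel~\cite{VON_KANEL} directly, then multiply the resulting local exponents to conclude $N_E \mid N(S)$.
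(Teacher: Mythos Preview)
Your proposal is correct and follows the same route as the paper: reduce to the local conductor exponent bounds $f_p \le 2$ for $p \ge 5$, $f_3 \le 3$, $f_2 \le 5$, and then multiply. The only difference is that the paper simply cites these bounds from \cite[IV\,10.4]{SILVERMAN_ELLIPTIC}, whereas you sketch their proofs via the Swan conductor and Ogg's formula---this is extra detail rather than a different argument.
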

\begin{proof}
Because $E$ has good reduction outside~$S$, its conductor factors as $N_E=\prod_{p\in S}p^{f_p}$ with local exponents bounded by~\cite[IV\,10.4]{SILVERMAN_ELLIPTIC}
\[
  f_p\le \begin{cases}5 & p=2,\\3 & p=3,\\2 & p\ge 5.\end{cases}
\]
The claim follows immediately.
\end{proof}

Consequently, the Effective Shafarevich step reduces to applying Cremona’s routines for each $d\mid N(S)$.  The classical nature of those algorithms allows us merely to summarise them: §\ref{subsection:es_isogeny_list} recalls the enumeration of $\mathbb Q$-isogeny classes, while §\ref{subsection:es_class} describes the (finite) search within a fixed class.

An elementary combination of Fact~\ref{fact:finiteness_conductor} with the Modularity Theorem yields the following quantitative corollary, previously obtained by von Känel.

\begin{corollary}[{\cite[Thm.~7.1]{VON_KANEL}}]\label{corollary:finiteness}
For every integer $N\ge 1$ and finite set $S$ of primes one has
\[
  \#\mathcal Y_1(N)(\mathbb Z[1/S])\;\le\;\frac{2}{3}\,[\mathrm{SL}_2(\mathbb Z):\Gamma_1(N)]\,N(S)\prod_{p\in S}\Bigl(1+\frac1p\Bigr).
\]
\end{corollary}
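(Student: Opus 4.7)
The plan is to factor the bound through the natural forgetful morphism $\pi\colon\mathcal Y_1(N)\to\mathcal M_{1,1}$, $(E,P)\mapsto E$, so that $\#\mathcal Y_1(N)(\mathbb Z[1/S])$ is bounded by (maximum fibre size of $\pi$) $\times$ (size of image). Over each $[E]\in\mathcal M_{1,1}(\mathbb Z[1/S])$, the fibre of $\pi$ embeds into the set of $P\in E(\mathbb Q)$ of exact order $N$: since $E$ has good reduction over $\mathrm{Spec}\,\mathbb Z[1/S]$, its $N$-torsion sections are in bijection with $\mathbb Q$-rational $N$-torsion of the generic fibre. Using $E[N](\overline{\mathbb Q})\cong(\mathbb Z/N)^{2}$, the fibre thus has at most $J_2(N)=N^{2}\prod_{p\mid N}(1-p^{-2})$ elements, and a short index computation identifies $J_2(N)$ with $[\mathrm{SL}_2(\mathbb Z):\Gamma_1(N)]$, supplying the first factor of the claimed bound.

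To estimate $\#\mathcal M_{1,1}(\mathbb Z[1/S])$, I will combine four classical inputs. First, Fact~\ref{fact:finiteness_conductor} forces every $E/\mathbb Z[1/S]$ to have conductor $N_E\mid N(S)$. Second, the Modularity Theorem assigns to each $\mathbb Q$-isogeny class of conductor $d$ a rational normalised newform, injecting isogeny classes of conductor $d$ into $S_2^{\mathrm{new}}(\Gamma_0(d))$; Atkin--Lehner decomposition then yields
$$\sum_{d\mid N(S)}\dim S_2^{\mathrm{new}}(\Gamma_0(d)) \;\le\; \dim S_2(\Gamma_0(N(S))) \;=\; g(X_0(N(S))).$$
Third, Mazur's theorem on rational isogenies, as refined by Kenku, caps each $\mathbb Q$-isogeny class at $8$ isomorphism classes. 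Fourth, the classical genus formula for $X_0(d)$ together with $\nu_\infty(X_0(d))\ge 2$ for $d>1$ gives $g(X_0(N(S)))\le\mu(N(S))/12$, where $\mu(d)=[\mathrm{SL}_2(\mathbb Z):\Gamma_0(d)]=d\prod_{p\mid d}(1+p^{-1})$. Since the primes dividing $N(S)$ are precisely those in $S$, these combine to
$$\#\mathcal M_{1,1}(\mathbb Z[1/S]) \;\le\; 8\cdot\frac{\mu(N(S))}{12} \;=\; \tfrac{2}{3}\,N(S)\prod_{p\in S}(1+p^{-1}),$$
and multiplying by the fibre bound delivers the stated inequality.

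There is no deep obstacle: the proof is a synthesis of classical ingredients (Modularity, Atkin--Lehner theory, Mazur--Kenku, the genus formula for $X_0$). The main point requiring care is that the Mazur--Kenku constant $8$ and the denominator $12$ must combine to exactly $2/3$, which forces the sharper genus bound $g(X_0(d))\le\mu(d)/12$ rather than $1+\mu(d)/12$; invoking $\nu_\infty\ge2$ for $d>1$ to absorb the $+1$ is the key step. The edge case $N(S)=1$ (i.e.\ $S=\emptyset$) is handled by the classical fact that $\mathcal M_{1,1}(\mathbb Z)$ is empty.
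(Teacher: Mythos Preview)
Your proof is correct and follows essentially the same approach as the paper: the paper's own argument (given as a brief Remark) likewise bounds the number of $\mathbb Q$-isogeny classes of conductor dividing $N(S)$ by the genus of $X_0(N(S))$, multiplies by Kenku's constant $8$, and then by $[\mathrm{SL}_2(\mathbb Z):\Gamma_1(N)]$ to account for the possible $\Gamma_1(N)$-level structures. Your write-up supplies the details the paper leaves implicit---the Atkin--Lehner newform decomposition, the identification $J_2(N)=[\mathrm{SL}_2(\mathbb Z):\Gamma_1(N)]$, and the use of $\nu_\infty\ge 2$ to sharpen the genus bound to $\mu/12$---but the skeleton is the same.
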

\begin{remark}
The bound combines (i) the number of $\mathbb Q$\,--isogeny classes of conductor dividing $N(S)$—at most the genus of $X_0(d)$—together with Kenku’s bound of $8$ curves per isogeny class~\cite{KENKU}, and (ii) the fact that each elliptic curve admits at most $[\mathrm{SL}_2(\mathbb Z):\Gamma_1(N)]$ distinct $\Gamma_1(N)$\,--level structures.
\end{remark}

\subsubsection{Cremona’s Algorithm for Listing Isogeny Classes}
\label{subsection:es_isogeny_list}

We briefly recall Cremona’s method \cite[Ch.\,2]{CREMONA} for listing the $\mathbb{Q}$-isogeny classes of elliptic curves of a fixed conductor~$d$. Write $X_0(d)$ for the compactified coarse moduli space of pairs \((E,C)\) with a cyclic subgroup $C\subset E$ of order~$d$ (i.e.\ $\Gamma_0(d)$-level structure). A fundamental corollary of the Modularity Theorem is the following canonical bijection.

\begin{theorem}[Modularity Theorem {\cite{WILES, TAYLOR-WILES, BCDT}}]
\label{theorem:modularity}
For every positive integer \(d\) there is a canonical bijection
\[
\begin{array}{c}
\bigl\{\text{$\mathbb{Q}$-isogeny classes of elliptic curves of conductor $d$}\bigr\}\\[6pt]
\Longleftrightarrow\\[6pt]
\bigl\{\text{normalised integral, cuspidal weight-$2$ Hecke newforms on }X_0(d)\bigr\}.
\end{array}
\]
\end{theorem}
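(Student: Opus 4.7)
The plan is to construct the bijection by assembling three well-known ingredients: (i) the modularity of elliptic curves as proved by Wiles, Taylor--Wiles, and Breuil--Conrad--Diamond--Taylor, which produces a newform from an elliptic curve; (ii) the Eichler--Shimura construction, which produces an elliptic curve (up to $\mathbb{Q}$-isogeny) from a weight-$2$ newform with rational integer Fourier coefficients; and (iii) strong multiplicity one for newforms together with Faltings' isogeny theorem, which together guarantee that the two constructions are mutually inverse.

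For the forward direction, given an elliptic curve $E/\mathbb{Q}$ of conductor $d$, I would form the Hasse--Weil $L$-function $L(E,s)$ from local point counts on reductions of $E$. The Modularity Theorem then provides a non-constant modular parametrization $X_0(d)\to E$, equivalently a weight-$2$ cuspidal Hecke eigenform $f$ on $\Gamma_0(d)$ with $L(f,s)=L(E,s)$. Because $a_p(E)\in\mathbb{Z}$ for every $p$ of good reduction, the Fourier coefficients of $f$ are rational integers, and Atkin--Lehner theory upgrades $f$ to a \emph{newform} of level exactly $d$. Since isogenous elliptic curves over $\mathbb{Q}$ share the same $L$-function (the $\ell$-adic Tate modules are isogenous as Galois representations), this construction descends to $\mathbb{Q}$-isogeny classes.

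For the reverse direction, given a normalised integer-coefficient cuspidal weight-$2$ newform $f$ on $X_0(d)$, I would invoke the Eichler--Shimura construction to form the abelian variety quotient $A_f = J_0(d)/I_f J_0(d)$, where $I_f$ is the annihilator of $f$ in the Hecke algebra $\mathbb{T}(d)$. When $f$ has rational integer Fourier coefficients, $\dim A_f=[\mathbb{Q}(\{a_n(f)\}):\mathbb{Q}]=1$, so $A_f$ is an elliptic curve. Comparing local $L$-factors on both sides shows that $A_f$ has good reduction outside $S\supseteq\{p:p\mid d\}$ and conductor exactly $d$.

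To close the loop, I would invoke strong multiplicity one (Atkin--Lehner): distinct normalised newforms of level dividing $d$ have distinct $L$-functions. Combined with Faltings' isogeny theorem---two elliptic curves over $\mathbb{Q}$ with identical $L$-functions are $\mathbb{Q}$-isogenous---this shows the two assignments are mutually inverse on isomorphism-/isogeny-classes, yielding a canonical bijection. The main obstacle, by an overwhelming margin, is the Modularity Theorem itself, which requires the full Galois-deformation/$R=T$ machinery; the excerpt treats it as a black box and I would do the same, since the remaining ingredients (Eichler--Shimura, Atkin--Lehner, Faltings) are classical.
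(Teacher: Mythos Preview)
Your sketch is correct, but note that the paper does not supply its own proof of this theorem: it is stated as a corollary of the cited works \cite{WILES, TAYLOR-WILES, BCDT} and used as a black box. The paper merely indicates the two directions of the bijection in passing---sending $[E]$ to the newform $f_E$ obtained as the inverse Mellin transform of the completed $L$-function of $E$, and sending $f$ to the elliptic curve $E_f=\mathbb{C}/\Lambda_f$ with $\Lambda_f$ the period lattice of $f$---without arguing that these are mutually inverse. Your outline (modularity, Eichler--Shimura, strong multiplicity one, Faltings' isogeny theorem) is the standard way to justify the bijection and is entirely compatible with, indeed more detailed than, the paper's treatment.
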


\begin{notation}
Denote by:    
\begin{itemize}
  \item $\mathcal I(d)$ -- the set of $\mathbb{Q}$-isogeny classes of elliptic curves of conductor $d$; for \(E/\mathbb{Q}\) write \([E]\in\mathcal I(d)\).
  \item $S_2(d)$ -- the $\mathbb{C}$-vector space of weight-$2$ cusp forms on~$X_0(d)$; $\mathcal H_2(d)^{\mathrm{new}}_{\mathbb{Z}}\subset S_2(d)$ -- the finite set of normalised integral Hecke newform (its cardinality is bounded above by $\dim_{\mathbb{C}} S_2(d) = g_0(d)$, the genus of \(X_0(d)\)).
\end{itemize}
\end{notation}

\smallskip
For \([E]\in\mathcal I(d)\) the corresponding newform \(f_E\) is obtained via the inverse Mellin transform of the completed $L$-function of \(E\). Conversely, for \(f\in\mathcal H_2(d)^{\mathrm{new}}_{\mathbb{Z}}\) one constructs
\[
  \Lambda_f \;=\; \Bigl\{\;\int_{\gamma} f(z)\,dz\;\Bigm|\;\gamma\in H_1\!\bigl(X_0(d),\mathbb{Z}\bigr)\Bigr\}, 
  \qquad E_f \;:=\; \mathbb{C}/\Lambda_f,
\]
and \(E_f\) descends to $\mathbb{Q}$. Cremona turns this correspondence into a practical procedure:

\begin{algorithm}[Cremona, step (1) of the Effective Shafarevich routine]
\label{algorithm:b1}
\leavevmode
\begin{description}
  \item[Input]  
    A conductor \(d\ge 1\); a Fourier-truncation parameter \(C\in\mathbb{Z}_{>0}\); a numerical error tolerance \(\varepsilon>0\).
  \item[Output]  
    A set \(\mathcal I(d)^{\mathrm{rep}}\) containing exactly one representative of each $\mathbb{Q}$-isogeny class of conductor~\(d\).
\end{description}

\begin{enumerate}
  \item Choose a \(\mathbb{Z}\)-basis \(\gamma_1,\dots,\gamma_{2g}\) of \(H_1(X_0(d),\mathbb{Z})\) with \(g=g_0(d)\).
  \item For each \(f=\sum_{n>0}a_nq^n\in\mathcal H_2(d)^{\mathrm{new}}_{\mathbb{Z}}\):
    \begin{enumerate}\renewcommand{\theenumii}{C\arabic{enumii}}
      \item Compute the first \(C\) Fourier coefficients \(a_1,\dots,a_C\) and form \(f_C=\sum_{n=1}^Ca_nq^n\).
      \item Integrate numerically to precision \(\varepsilon\):  
            \(\omega_{C,\varepsilon,i}=\int_{\gamma_i}f_C(z)\,dz\).
      \item Find generators \(\Omega_{C,\varepsilon,1},\Omega_{C,\varepsilon,2}\) of the lattice
            \(\Lambda_{f,C,\varepsilon}=\sum_{i}\mathbb{Z}\,\omega_{C,\varepsilon,i}\).
      \item Move \(\tau'=\Omega_{C,\varepsilon,1}/\Omega_{C,\varepsilon,2}\) into the standard fundamental domain
            \(\{\,z=x+iy\in\mathbb H : |x|\le\frac12,\,x^2+y^2\ge1\}\); denote the result by \(\tau\).
      \item Evaluate \(c_4(\tau),c_6(\tau)\) to precision \(\varepsilon\) and round to the nearest integers \(c_4^{C,\varepsilon},c_6^{C,\varepsilon}\in\mathbb{Z}\).
      \item Build an integral Weierstrass model \(E_f^{C,\varepsilon}\) from these invariants.
      \item Check equality of \(L\)-series.  
            If \(E_f^{C,\varepsilon}=E_f\), add \(E_f\) to \(\mathcal I(d)^{\mathrm{rep}}\);  
            otherwise return to step (C1) with higher precision.
    \end{enumerate}
\end{enumerate}
\end{algorithm}

\begin{remark}
Step (C1) is achieved by simultaneously diagonalizing the Hecke algebra with respect to the homology lattice, on which the Hecke action is given by \emph{explicit} integer matrices; see \cite[\S 2.7]{CREMONA}. This circumvents having to express the Hecke action on the space of weight $2$ cusp forms directly, on which we do not have such an explicit description.
\end{remark}

\begin{remark}
The rounding of the modular lattice invariants $c_4^{C,\epsilon}, c_6^{C,\epsilon}$ in step (C5) is justified by Edixhoven~\cite{EDIXHOVEN}, confirming that the exact invariants $c_4,c_6$ associated to $f$ are integers.
\end{remark}

\subsubsection{Listing all curves in a fixed isogeny class}
\label{subsection:es_class}
We now sketch the second stage of Cremona’s procedure \cite[Ch.\,3]{CREMONA}: given a single elliptic curve \(E/\mathbb{Q}\) of conductor~\(d\), list \emph{all} curves in its $\mathbb{Q}$-isogeny class. The method essentially reconstructs the \emph{isogeny graph} of~\(E\).

Since every pair of $\mathbb{Q}$-isogenous non-isomorphic rational elliptic curves are linked by a chain of cyclic prime degree (rational) isogenies, it is enough to determine, for every prime $\ell$, the set of rational elliptic curves $\ell$-isogenous to $E$. Mazur’s celebrated Rational Isogeny Theorem shows that only finitely many primes may occur:
\begin{theorem}[Mazur--Rational Isogenies {\cite[Theorem 1]{MAZUR_ISOGENY}}]
\label{theorem:mazur_finiteness}
Let $\ell$ denote a prime number, such that some rational elliptic curve admits a rational $\ell$-isogeny. Then \(\ell\in \{2,3,5,7,11,13,17,19,37,43,67,163\}\).
\end{theorem}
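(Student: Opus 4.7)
The plan is to translate the statement into a question about rational points on a modular curve and then attack it by stratifying on the genus of $X_0(\ell)$.

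A rational $\ell$-isogeny $\phi:E\to E'$ is determined by its kernel $C=\ker\phi$, a cyclic subgroup of $E$ of order $\ell$ stable under $\mathrm{Gal}(\overline{\mathbb{Q}}/\mathbb{Q})$. Such pairs $(E,C)$ are classified by the non-cuspidal $\mathbb{Q}$-points of the coarse moduli space $X_0(\ell)$, so it suffices to determine, for each prime $\ell$, whether $X_0(\ell)(\mathbb{Q})$ contains any non-cuspidal point. For the primes $\ell\in\{2,3,5,7,13\}$ the curve $X_0(\ell)$ has genus $0$ with rational cusps, hence is isomorphic to $\mathbb{P}^1_{\mathbb{Q}}$, and infinitely many rational $\ell$-isogenies exist. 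For $\ell\in\{11,17,19,37,43,67,163\}$ the curve $X_0(\ell)$ has genus $1$ and is itself an elliptic curve over $\mathbb{Q}$; I would compute its Mordell--Weil group---in each of these cases rank $0$---and inspect the torsion subgroup to locate explicit non-cuspidal rational points corresponding to sporadic rational $\ell$-isogenies (for $\ell\in\{37,43,67,163\}$ these come from CM elliptic curves).

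The genuinely hard part is showing that no other prime can occur. For the remaining primes $X_0(\ell)$ has genus $\ge 2$, and while Faltings gives finiteness of $X_0(\ell)(\mathbb{Q})$ it is ineffective and, in particular, does not rule out non-cuspidal points. The approach is Mazur's Eisenstein ideal machinery applied to the Jacobian $J_0(\ell)$. Let $n$ denote the numerator of $(\ell-1)/12$; by a theorem of Ogg, the cuspidal subgroup $\mathcal C\subset J_0(\ell)(\mathbb{Q})$ generated by the difference of the two cusps is cyclic of order $n$. Let $\mathbb{T}$ be the Hecke algebra acting on $J_0(\ell)$ and let $I\subset\mathbb{T}$ be the Eisenstein ideal generated by $\{T_p-p-1 : p\ne\ell\}\cup\{w_\ell+1\}$; form the Eisenstein quotient $\widetilde J := J_0(\ell)/IJ_0(\ell)$. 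The two inputs to import from \cite{MAZUR_ISOGENY} are: (a) $\widetilde J(\mathbb{Q})$ has Mordell--Weil rank $0$, and its $\mathbb{Q}$-torsion coincides with the image of $\mathcal C$; and (b) via the Deligne--Rapoport model of $X_0(\ell)$ over $\mathbb{Z}_\ell$, any putative non-cuspidal $\mathbb{Q}$-point specialises mod $\ell$ to a supersingular point, whose image in $\widetilde J(\mathbb{F}_\ell)$ lies outside the reduction of $\mathcal C$, contradicting (a). The small genus-$2$ primes $\ell=23,29,31$ need supplementary bookkeeping on an appropriate optimal quotient but are treated by the same circle of ideas.

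The dominant obstacle is input~(a): bounding the Mordell--Weil rank of the Eisenstein quotient is the deep descent of \cite{MAZUR_ISOGENY}, requiring the geometry of the N\'eron model of $J_0(\ell)$, Shimura's construction of the abelian-variety quotient attached to an Eisenstein-congruent newform, and a fine analysis of the associated $\ell$-adic Galois representations on the Tate module. The local input~(b) is more geometric but still depends on the delicate structure of the Deligne--Rapoport model at $\ell$. I would not attempt to reprove~(a) from scratch; the plan is to invoke it as a black box and concentrate the remaining work on explicitly verifying the genus-$0$ and genus-$1$ existence claims, carrying out the mod-$\ell$ reduction step that feeds~(a), and handling the supplementary descent needed for the degenerate primes.
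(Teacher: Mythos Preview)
The paper does not prove this theorem: it is quoted as \cite[Theorem~1]{MAZUR_ISOGENY} and used purely as a black box in \S\ref{subsection:es_class} to bound the primes that can occur as degrees of rational isogenies. There is therefore nothing in the paper to compare your argument against; any proof you supply goes well beyond what the paper does.

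That said, your outline contains a factual slip that would derail the stratification. The curves $X_0(\ell)$ have genus~$1$ only for $\ell\in\{11,17,19\}$; for $\ell=37,43,67,163$ the genera are $2,3,5,13$ respectively, so these are \emph{not} elliptic curves and cannot be handled by a rank-$0$ Mordell--Weil computation on $X_0(\ell)$ itself. The sporadic non-cuspidal rational points for these four primes (all CM, as you note) live on higher-genus modular curves and are detected only after Mazur's Eisenstein-quotient analysis shows they are the \emph{only} non-cuspidal points---in particular, $\ell=37$ is a genus-$2$ prime that admits a rational isogeny, so it cannot be lumped with the ``supplementary bookkeeping'' cases $23,29,31$ that must be shown to have none. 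Your genus-$\ge 2$ sketch via the Eisenstein ideal is the right shape for Mazur's actual argument, but the case division feeding into it needs to be corrected before the plan is coherent.
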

Hence every $\mathbb{Q}$-isogeny class is generated by cyclic isogenies of degrees in the above list. Moreover, Kenku showed~\cite{KENKU} that any such class contains at most eight $\mathbb{Q}$-isomorphism classes, so a breadth first search in the isogeny graph is bound to terminate.

\paragraph{Computing the \(\ell\)-isogenous quotients.}
It remains to explain how, for every pair $(E, \ell)$, consisting of an elliptic curve $E$ given by a Weierstrass model:
\[
  y^{2}+a_{1}xy+a_{3}y \;=\;
  x^{3}+a_{2}x^{2}+a_{4}x+a_{6},
\]
and a fixed prime \(\ell\) from Mazur’s list, one computes all \(\ell\)-isogenous curves \(E'\). 

If $\phi_{\ell}: E\to E'$ is an \(\ell\)-isogeny, then its kernel is a $\mathbb{Q}$-defined cyclic group, generated by a geometric point $P\in E(\overline{\mathbb{Q}})$ of exact order $\ell$. A necessary and sufficient condition for the quotient $E/\langle P \rangle$ to be $\mathbb{Q}$-defined is that the subgroup $\langle P \rangle$ is Galois-stable. Among the $\ell+1$ subgroups of $E[\ell]$, only two are stable under complex conjugation, hence descend to $\mathbb{R}$---a necessary condition for descent to $\mathbb{Q}$. Denote these two subgroups by $G_1$ and $G_2$.

Using the uniformization map associated with the period lattice of $E$ (already obtained in Algorithm~\ref{algorithm:b1}), one computes generators of $G_1$ and $G_2$. In order to obtain models for $E/G_i$, $i=1,2$, one uses Vélu’s formula:
\begin{theorem}[V\'{e}lu's Formula~\cite{VELU}]
\label{theorem:velu}
Let $P = (x,y) \in E(\overline{\mathbb{Q}})$ have exact order $\ell$. For $1 \le k \le \lfloor \ell/2 \rfloor$, put
\begin{equation}
t_k := 6x_k^2 + b_2x_k + b_4,\quad u_k := 4x_k^3 + b_2x_k^2 + 2b_4x_k + b_6.
\end{equation}
where $kP = (x_k, y_k)$, and \(b_2,b_4,b_6\) are the usual invariants\footnote{The invariants $b_2$, $b_4$, and $b_6$ are derived from rewriting the curve in the form $y^2 = 4x^3 + b_2x^2 + b_4x + b_6$, with $b_2 \in \{-5,\dots,6\}$ after minimising. See \cite[III.\S 1]{SILVERMAN_ELLIPTIC}.}. Let 
\begin{equation}
t = \sum_{k=1}^{\lfloor \ell/2 \rfloor} t_k,\quad w = \sum_{k=1}^{\lfloor \ell/2 \rfloor}(u_k + x_kt_k).    
\end{equation}
Then a model for $E/\langle P \rangle$ is given by:
\begin{equation}
y^{2}+a_{1}xy+a_{3}y = x^{3}+a_{2}x^{2}+(a_{4}-5t)x+(a_{6}-b_2t-7w).
\end{equation}
\end{theorem}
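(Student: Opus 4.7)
The plan is to follow Vélu's original construction: exhibit two rational functions $X,Y$ on $E$ that are invariant under translation by $F:=\langle P\rangle$, descend to generators of a Weierstrass model of $E/F$, and then read off the coefficients. Put $F^*:=F\setminus\{O\}$. The fundamental observation is that
\begin{equation*}
X(T)\;:=\;\sum_{Q\in F}x(T+Q)\;-\;\sum_{Q\in F^*}x(Q)\;=\;x(T)+\sum_{Q\in F^*}\bigl(x(T+Q)-x(Q)\bigr)
\end{equation*}
is $F$-invariant (the first expression is a sum over a full coset of $F$, shifted by a constant), hence descends to a rational function on $E/F$; defining $Y(T)$ analogously yields a second $F$-invariant rational function, and the goal is to show that the pair $(X,Y)$ satisfies the stated Weierstrass equation on $E/F$.

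The next step is divisorial. For each $Q\in F$, the function $T\mapsto x(T+Q)$ has a double pole at $T=-Q$ and no others, while $T\mapsto y(T+Q)$ has a triple pole there; summing, $X$ and $Y$ acquire poles of orders exactly $2$ and $3$ at every point of $F$ and descend on $E/F$ to rational functions with a double, respectively triple, pole at the image $\bar O$. Riemann--Roch then identifies $X,Y$ as Weierstrass coordinates, and comparison of the leading Laurent expansions in the local parameter $z=-x/y$ at $\bar O$---both $X-x$ and $Y-y$ are regular at $O$ by construction---forces the coefficients $a_1,a_2,a_3$ of the new model to coincide with those of $E$.

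What remains is to determine the coefficients $A,B$ in
\begin{equation*}
Y^{2}+a_{1}XY+a_{3}Y\;=\;X^{3}+a_{2}X^{2}+AX+B.
\end{equation*}
The cleanest route is to Taylor-expand $X(T)$ and $Y(T)$ at $T=O$ to two further orders beyond the leading $z^{-2}$, $z^{-3}$ terms, using the addition formulas to write each $x(T+Q)$ and $y(T+Q)$ as a Laurent series in $z$ whose coefficients are rational expressions in $x_Q,y_Q$. Substituting into the putative relation and equating the coefficients of $z^{0}$ and $z^{1}$ determines $A,B$ as linear combinations of symmetric sums in $(x_Q,y_Q)$ over $Q\in F^*$. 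Pairing each $Q$ with $-Q$ cancels the contributions odd in $y_Q$ and, via the Weierstrass identity $(2y_Q+a_{1}x_Q+a_{3})^{2}=4x_Q^{3}+b_{2}x_Q^{2}+2b_{4}x_Q+b_{6}$, eliminates $y_Q^{2}$ altogether. Each surviving sum then reduces to twice the corresponding sum over $1\le k\le\lfloor\ell/2\rfloor$, producing the identities $A=a_{4}-5t$ and $B=a_{6}-b_{2}t-7w$.

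The main obstacle is the meticulous bookkeeping in the coefficient computation: one must expand both $X$ and $Y$ to sufficient order, track all coefficients arising from the addition formulas, exploit the $Q\mapsto -Q$ pairing to eliminate odd powers of $y_Q$, and verify the precise numerical constants $5$ and $7$ appearing in front of $t$ and $w$. These constants are characteristic of Vélu's formula and emerge only after an explicit identification of coefficients in the Laurent expansion; the invariance, divisor analysis, and pairing arguments above are comparatively formal.
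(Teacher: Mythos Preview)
The paper does not supply a proof of Theorem~\ref{theorem:velu}; it is stated as a quoted result with a citation to V\'elu's original note~\cite{VELU}, and the surrounding text merely explains how the formula is \emph{used} in Cremona's algorithm. So there is no ``paper's own proof'' to compare against.

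Your proposal is a faithful outline of V\'elu's original argument: define the $F$-invariant functions $X,Y$ as translated sums of $x,y$, analyse their divisors to see they descend to Weierstrass coordinates on $E/F$, match leading Laurent terms at $O$ to pin down $a_1,a_2,a_3$, and then expand further to identify $a_4-5t$ and $a_6-b_2t-7w$. This is correct as a strategy and is essentially how the result is proved in the literature. The one place where your sketch is genuinely incomplete is the final paragraph: you correctly flag the coefficient bookkeeping as ``the main obstacle'' but do not carry it out, so the constants $5$, $b_2$, and $7$ remain unverified in your write-up. If you were asked to produce a self-contained proof rather than a plan, that computation (or an equivalent one, e.g.\ directly substituting the addition-law expressions for $x(T+Q),y(T+Q)$ and simplifying) would need to appear explicitly.
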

Note that while the points generating $G_i$ may not be individually rational, the subgroup $G_i$ itself can still be defined over $\mathbb{Q}$. When $G_i$ is $\mathbb{Q}$-defined, either the model produced by V\'{e}lu's formula, or the model obtain by the change of variables $(x, y) \mapsto (l^2x, l^3y)$ is integral. Rounding to the nearest integral coefficients, it remains to show that the resulting curve, $E_i$, is isogenous but not isomorphic.

To verify that $E_i$ is isogenous to $E$, one compares their $L$-functions. To verify that $E_i$ is non-isomorphic to $E$, one compares $j$-invariants. When $j(E)\neq j(E_i)$, the curves are necessarily non-isomorphic; otherwise $E_i$ is a twist of $E$, and one verifies that it is a non-trivial twist (see \cite[X.5.4]{SILVERMAN_ELLIPTIC}). For curves without complex multiplication this final check is superfluous, since such curves admit no non-trivial isogenous twists.

\subsection{The Fibre Computation Step}
\label{section:fibre_computation}
Once every elliptic curve \(E/K\) with good reduction outside \(S\) has been enumerated, the remaining task is to extract, for each such \(E\), the \(S\)-integral points of \(\mathcal Y\) lying above \(E\) under the modular period map. Concretely, we want an algorithm that, given

\begin{itemize}
  \item an \(S\)-integral elliptic moduli problem
        \(\bigl(\mathcal Y,\Phi_{\mathcal M}\bigr)/\mathcal O_{K,S}\), and
  \item an elliptic curve \(E/K\) with good reduction outside \(S\),
\end{itemize}
returns the finite set
\[
   \Phi_{\mathcal M}^{-1}(E)\bigl(\mathcal O_{K,S}\bigr)
     \;\subseteq\;
   \mathcal Y\bigl(\mathcal O_{K,S}\bigr),
\]
or a larger subset of $Y\bigl(\mathcal O_{K,S}\bigr)$.

We present two simple and practical strategies.
\begin{enumerate}
  \item \emph{A universal geometric approach.} Valid for \emph{any} \(S\)-integral elliptic moduli problem, it uses the algebraicity and quasi-finiteness of the \(j\)-map to reduce the fibre problem to solving a single univariate polynomial equation, followed by integrality checks.
  \item \emph{A moduli-theoretic approach (for modular curves).} A method adapted to modular curves such as \(\mathcal{Y}_1(N)\), which exploits the explicit moduli interpretation of $\mathcal{Y}_1(N)$, giving a model-independent description of the fibre, especially convenient for proving (non-)existence or even counting \(S\)-integral points.
\end{enumerate}

We now describe each method in turn.

\subsubsection{Fibre computation: a universal strategy}
\label{section:general_strategy}
Let \(\bigl((\mathcal Y,\Phi_{\mathcal M}),E\bigr)\) be as in the input specification detailed in the beginning of \S\ref{section:fibre_computation}. We give a purely geometric procedure that recovers a subset of $Y\bigl(\mathcal O_{K,S}\bigr)$ containing the fibre \(\Phi_{\mathcal M}^{-1}(E)(\mathcal O_{K,S})\), which applies to \emph{any} \(S\)-integral elliptic moduli problem.

\medskip
\paragraph{\textnormal{$j$}-map set-up.}
For \(Q\in\mathcal M_{1,1}(\mathcal O_{K,S})\) denote by
\(\mathcal E_{Q}\) the corresponding elliptic curve.
The \(j\)-invariant defines a morphism
\[
   j_{\mathcal M}\;:\;
   \mathcal M_{1,1}\;\longrightarrow\;\mathbb{A}^{1}_{\mathcal O_{K,S}},
   \qquad
   Q\;\longmapsto\;j\bigl(\mathcal E_{Q}\bigr).
\]
Compose with the modular period map to obtain  
\(j_{\mathcal Y}:=j_{\mathcal M}\circ\Phi_{\mathcal M}\);
this is a non-constant, quasi-finite morphism of affine schemes
\[
   j_{\mathcal Y}\;:\;\mathcal Y\;\longrightarrow\;
                \mathbb{A}^{1}_{\mathcal O_{K,S}}.
\]
By construction, for every $E\in \mathcal{M}_{1,1}(\mathcal{O}_{K,S})$ with $j$-invariant $j_E := j(E)$, $\Phi_{\mathcal{M}}^{-1}(E)(\mathcal{O}_{K,S})\subseteq j_{\mathcal Y}^{-1}(j_E)(\mathcal{O}_{K,S})\subseteq Y\bigl(\mathcal O_{K,S}\bigr)$.
\medskip
\begin{lemma}
Notations as above. There is an algorithm, which effectively computes the $S$-integral points of $j_{\mathcal Y}^{-1}(j_E)(\mathcal{O}_{K,S})$.
\end{lemma}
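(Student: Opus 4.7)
The plan is to exhibit the algorithm explicitly, reducing the computation to univariate polynomial factorization over $K$ followed by finitely many integrality checks; the crucial geometric input is the quasi-finiteness of $j_{\mathcal Y}$ on the generic fibre.

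First, I would fix a concrete presentation. Since $\mathcal Y$ is affine over $\mathcal O_{K,S}$ (as noted in the remark following the definition of an elliptic moduli problem), I write $\mathcal Y = \mathrm{Spec}\,A$ with $A = \mathcal O_{K,S}[x_1,\ldots,x_n]/I$ for an explicitly given ideal $I$. Because $\Phi_{\mathcal M}$ is part of the input, the regular function $j_{\mathcal Y}\in A$ is available as an explicit rational expression $j_{\mathcal Y}=P(x_1,\ldots,x_n)/Q(x_1,\ldots,x_n)$ in the generators, and the fibre $j_{\mathcal Y}^{-1}(j_E)$ is the closed subscheme cut out by
\[
  J \;=\; I \;+\; \bigl(P(x_1,\ldots,x_n) \,-\, j_E\,Q(x_1,\ldots,x_n)\bigr)\;\subseteq\;\mathcal O_{K,S}[x_1,\ldots,x_n].
\]

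Second, I would enumerate the finite set of $K$-points of $V(J)$. The key geometric observation is that non-isotriviality of the fibration $\mathcal E\to\mathcal Y$ forces $j_{\mathcal Y}$ to be non-constant on the irreducible curve $\mathcal Y_K$, hence $j_{\mathcal Y}\colon\mathcal Y_K\to\mathbb A^1_K$ is quasi-finite and $V(J)_K$ is zero-dimensional. A Gr\"obner basis computation on $J\otimes_{\mathcal O_{K,S}} K$ with respect to a lexicographic monomial order---or, in the common case of an affine plane model, a single resultant---converts $V(J)_K$ into a triangular system whose $K$-rational solutions are recovered by factoring univariate polynomials over the number field $K$. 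In particularly simple cases such as $\mathcal Y = \mathbb P^1\setminus\{0,1,\infty\}$, no Gr\"obner step is needed: the equation is already a single univariate polynomial in one coordinate. Call the resulting finite set $F_K \subseteq \mathcal Y(K)$.

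Third, I would sieve $F_K$ for $S$-integral representatives. For each $P\in F_K$ with coordinates $(x_1(P),\ldots,x_n(P))\in K^n$, I compute the fractional $\mathcal O_K$-ideals generated by the denominators, factor them into primes, and retain $P$ precisely when every prime occurring in these factorizations lies in $S$. If a single affine chart does not cover $\mathcal Y$, I repeat on each chart of an affine atlas and take the union; by the valuative criterion of properness applied to the morphism $\mathrm{Spec}\,\mathcal O_{K,p}\to\mathrm{Spec}\,K$, the resulting set is exactly $j_{\mathcal Y}^{-1}(j_E)(\mathcal O_{K,S})$.

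The main obstacle is the termination of step two, which rests on knowing a priori that $V(J)_K$ is zero-dimensional, allowing solution by univariate factorization; this is precisely what non-isotriviality of $\Phi_{\mathcal M}$---baked into the definition of an elliptic moduli problem---provides. Once granted, the remaining ingredients (Gr\"obner bases, univariate factorization over a number field, and ideal factorization in $\mathcal O_K$) are routine in any modern computer algebra system, so the procedure terminates in finite time.
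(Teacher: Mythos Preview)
Your proposal is correct and follows essentially the same route as the paper: present $\mathcal Y$ affinely, form the ideal cutting out the fibre over $j_E$, reduce via Gr\"obner elimination to univariate polynomial root-finding over $K$, and then check $S$-integrality of the resulting coordinates. Your version is in fact slightly more careful in places---you explicitly justify zero-dimensionality from non-isotriviality and allow for multiple affine charts---whereas the paper simply asserts finiteness of the fibre and works in a single chart.
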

\begin{proof}
Put \(j_{E}:=j(E)\in\mathcal O_{K,S}\). The scheme-theoretic fibre \(j_{\mathcal Y}^{-1}(j_{E})\) is finite étale over \(\mathcal O_{K,S}\) and can be described explicitly:

\begin{itemize}
\item Present \(\mathcal Y\) as
      \(\mathrm{Spec}\,\!\bigl(\mathcal O_{K,S}[x_1,\dots,x_n]/I(\mathcal Y)\bigr)\).
\item Form the ideal
      \(J_E:=I(\mathcal Y)+(j_{\mathcal Y}-j_{E})\).
      The quotient algebra over \(\mathcal O_{K,S}\) defines the fibre.
\end{itemize}

Base-changing to \(K\) and running a Gröbner-basis elimination yields an
isomorphism
\[
   K[x_1,\dots,x_n]/J_E\;\otimes_{\mathcal O_{K,S}}K
      \;\cong\;
   K[t]/\bigl(f(t)\bigr)
\]
for some non-zero \(f(t)\in K[t]\).  
Thus \(K\)-rational points of the fibre correspond to roots of
\(f(t)=0\), which can be found effectively (e.g.\ \cite[\S3.6.2]{COHEN_BOOK}).
A root \(t_0\) gives a morphism
\(\mathrm{Spec}\, K\xrightarrow{t_0}j_{\mathcal Y}^{-1}(j_E)\).
If all coordinate functions \(x_i(t_0)\) lie in
\(\mathcal O_{K,S}\) then \(t_0\) factors through
\(\mathrm{Spec}\,\mathcal O_{K,S}\), producing an \(S\)-integral point of the fibre.    
\end{proof}

\begin{example}[thrice-punctured line]\label{example:level_structure_step}
Set \(\mathcal Y=\mathbb{P}^{1}\setminus\{0,1,\infty\}\) with coordinate \(t\).
The Legendre family
\(\mathcal E_t:y^{2}=x(x-1)(x-t)\) satisfies
\[
   j_{\mathcal Y}(t)=2^{8}\,\frac{(t^{2}-t+1)^{3}}{t^{2}(t-1)^{2}}.
\]
Given \(P\in\mathcal M_{1,1}(\mathbb{Z}[1/S])\) with associated curve
\(\mathcal E_{P}\) and \(j\)-invariant \(j_P\), the fibre
\(j_{\mathcal Y}^{-1}(j_P)\) is
\[
   \mathrm{Spec}\,\!\bigl(
     \mathcal O_{K,S}[t,1/t,1/(1-t)]/\bigl(j_{\mathcal Y}(t)-j_P\bigr)
   \bigr).
\]
Hence \(t\) must satisfy
\(
   2^{8}(t^{2}-t+1)^{3}=j_P\,t^{2}(t-1)^{2}.
\)
An $S$-integral solution \(t_0\) must also make \(t_0,\;1/t_0,\;1/(1-t_0)\) \(S\)-integral, i.e.\ \(t_0\) solves the $S$-unit equation.
\end{example}

\subsubsection{Fibre computation: a moduli-theoretic approach}
\label{section:modular_strategy}
We now specialise to the case \(\mathcal Y=\mathcal Y_1(N)\), the (affine) modular curve with \(\Gamma_1(N)\)-level structure, defined over \(\mathcal O_{K,S}\). Thanks to its moduli interpretation, the fibre of the period map admits a \emph{model--free} description, well suited to counting or proving (non-)existence of $S$-integral points.

\begin{proposition}\label{fact:modular_description}
There is a natural bijection between $\mathcal Y_1(N)\bigl(\mathcal O_{K,S}\bigr)$ and isomorphism classes of pairs $(E,P)$, where \(E/K\) is an elliptic curve with good reduction outside~\(S\) and \(P\in E(K)\) is a torsion point of exact order~\(N\).
\end{proposition}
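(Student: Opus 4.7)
The plan is to exploit the moduli-theoretic interpretation of $\mathcal{Y}_1(N)$ as a fine moduli scheme (we may restrict to $N\ge 4$, which is the only range where rational points exist by Mazur's torsion theorem, so that $[\Gamma_1(N)]$ is representable in the sense of Deligne--Rapoport / Katz--Mazur). Under this interpretation, for a base scheme $T$ a $T$-point of $\mathcal{Y}_1(N)$ is an isomorphism class of pairs $(\mathcal E/T, P)$, where $\mathcal E$ is an elliptic scheme over $T$ and $P$ is a $\Gamma_1(N)$-structure (i.e.\ a section of exact order $N$). Applying this to $T = \mathrm{Spec}\,\mathcal O_{K,S}$ gives the target bijection once we translate the relative moduli data back to a pair over $K$ together with a good-reduction hypothesis.

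The forward direction is straightforward. Given $x\in\mathcal Y_1(N)(\mathcal O_{K,S})$, representability produces a pair $(\mathcal E/\mathcal O_{K,S},\tilde P)$; passing to the generic fibre yields $E := \mathcal E_K$ and $P := \tilde P_K \in E(K)$ of exact order $N$. Since $\mathcal E$ is by hypothesis an elliptic scheme (smooth, proper, relative dimension one) over $\mathcal O_{K,S}$, the curve $E$ acquires good reduction at every prime outside $S$ by definition.

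The backward direction is where the content lies. Given $(E/K,P)$ with good reduction outside $S$ and $P$ of exact order $N$, I would form the N\'eron model $\mathcal E/\mathcal O_{K,S}$; by the good-reduction assumption $\mathcal E$ is automatically an elliptic scheme, and by the N\'eron mapping property $P\in E(K)=\mathcal E(\mathcal O_{K,S})$ extends uniquely to a section $\tilde P:\mathrm{Spec}\,\mathcal O_{K,S}\to\mathcal E$. I would then verify that $\tilde P$ is a $\Gamma_1(N)$-structure, i.e.\ that the Cartier divisor $\sum_{i=0}^{N-1}[i\tilde P]$ is a finite flat subgroup scheme of rank $N$. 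At residue characteristics coprime to $N$ this is immediate: the N-torsion $\mathcal E[N]$ is \'etale there and reduction on prime-to-$p$ torsion is injective, so $\tilde P$ remains of order $N$ fibrewise. At residue characteristics dividing $N$ (necessarily primes outside $S$ whenever $N\notin\mathcal O_{K,S}^{\times}$), one invokes the Katz--Mazur definition of a $\Gamma_1(N)$-structure: the schematic closure in $\mathcal E$ of the $K$-subgroup $\langle P\rangle\subset\mathcal E[N]$ is flat over $\mathcal O_{K,S}$ with generic rank $N$, sits inside the finite flat group scheme $\mathcal E[N]$, and is therefore itself finite flat of rank $N$, giving the required $\Gamma_1(N)$-structure.

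The main obstacle is this last verification at primes of residue characteristic dividing $N$, where the naive notion ``$P$ has exact order $N$'' does not obviously behave well under specialisation; the Katz--Mazur formalism (Drinfeld level structures / schematic closure of the subgroup generated by $P$) is the right language to push the check through. Having established both directions, I would close the argument by noting that the two constructions are mutually inverse and respect isomorphisms: the N\'eron model is a functor, and any isomorphism of generic fibres of good-reduction elliptic curves extends uniquely to an isomorphism of the N\'eron models by the mapping property, so the equivalence is natural on the level of isomorphism classes.
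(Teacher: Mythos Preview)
Your proposal is correct and follows essentially the same route as the paper: invoke the moduli interpretation of $\mathcal Y_1(N)$, pass to the generic fibre for the forward map, and use the N\'eron model together with the N\'eron mapping property for the inverse. Your additional care at primes dividing $N$ (via Katz--Mazur Drinfeld level structures) is unnecessary in the paper's setting, which treats $\mathcal Y_1(N)$ as a $\mathbb Z[1/N]$-scheme and hence implicitly assumes $N\in\mathcal O_{K,S}^{\times}$, so that $\mathcal E[N]$ is \'etale over $\mathcal O_{K,S}$ and the ``exact order $N$'' condition specialises trivially.
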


\begin{proof}
The functor represented by \(\mathcal Y_1(N)\) sends a \(\mathbb{Z}[1/N]\)-scheme \(T\) to the set of isomorphism classes of pairs \((E\!\to\!T,\;P)\) with \(P\) a section of exact order~\(N\). Thus \(\mathcal Y_1(N)\bigl(\mathcal O_{K,S}\bigr)\) classifies pairs \((\mathcal E,\mathcal P)\) over \(\mathcal O_{K,S}\).

Because \(\mathcal Y_1(N)\) is separated, the inclusion \(\iota:\mathcal Y_1(N)(\mathcal O_{K,S})\hookrightarrow \mathcal Y_1(N)(K)\)
is injective. The image is contained in the set of pairs $(E,P)$, with $E/K$ an elliptic curve with good reduction outside $S$, and $P\in E[N](K)$, an exact order $N$ rational point. It remains to show that the inclusion map surjects onto such pairs.

Let \((E,P)\in\mathcal Y_1(N)(K)\) with \(E\) having good reduction outside~\(S\). Its Néron model \(\mathcal E/\mathcal O_{K,S}\) is an elliptic curve, and the Néron mapping property lifts \(P\) uniquely to a section \(\mathcal P\) of exact order~\(N\). Hence \((\mathcal E,\mathcal P)\) lies in \(\mathcal Y_1(N)(\mathcal O_{K,S})\), and \(\iota(\mathcal E,\mathcal P)=(E,P)\), proving surjectivity.
\end{proof}

\paragraph{Computational consequences.}
For a fixed elliptic curve \(E/K\) with good reduction outside $S$, the fibre \(\Phi_{\mathcal M}^{-1}(E)\bigl(\mathcal O_{K,S}\bigr)\) is identified with the set of exact order-\(N\) rational torsion points on \(E\). Hence the fibre computation problem reduces to computing \(E[N](K)\), and retaining the subset of points of exact order~\(N\). We recall an efficient division-polynomial algorithm for this task in~\S\ref{subsection:division_polynomials}.

\begin{remark}
The same argument extends to other level structures (e.g.\ full level~\(N\)). We restrict to \(\Gamma_1(N)\) because a \(\mathbb{Q}\)-rational elliptic curve has no non-trivial full level-\(N\) structure for \(N>2\) by Mazur’s Torsion Theorem~\cite[Theorem 2]{MAZUR_ISOGENY}.
\end{remark}

\paragraph{A moduli-theoretic view of \texorpdfstring{$\mathbb{P}^{1}\!\setminus\!\{0,1,\infty\}$}{P1-0,1,∞}}
The functor attached to the subgroup \(\Gamma(2)\), which classifies pairs \((E,\alpha)\) with a \emph{full level-\(2\)} structure \(\alpha:E[2]\xrightarrow{\sim}(\mathbb{Z}/2\mathbb{Z})^{2}\), is not representable by a scheme but \emph{is} represented by a Deligne--Mumford stack \(\mathcal Y(2)\). Its coarse moduli space is the thrice--punctured line \(\mathbb{P}^{1}\setminus\{0,1,\infty\}\); hence, endowing the set $\mathbb{P}^{1}\setminus\{0,1,\infty\}(\mathcal{O}_{K,S})$ with a clean modular description, analogous to that of $\mathcal{Y}_{1}(N)(\mathcal{O}_{K,S})$.

\begin{fact}
\label{fact:level2_description}
There is a canonical bijection
\[
   \mathbb{P}^{1}\!\setminus\!\{0,1,\infty\}
        \bigl(\mathcal O_{K,S}\bigr)
   \;\longleftrightarrow\;
   \Bigl\{\text{isomorphism classes of }(E,\alpha)\Bigr\},
\]
where \(E/K\) has good reduction outside \(S\) and \(\alpha\) is a full level-$2$ structure on $E$. Two pairs \((E,\alpha)\) and \((E',\alpha')\) are identified if they become isomorphic over the maximal extension \(K_S/K\) unramified outside~\(S\).
\end{fact}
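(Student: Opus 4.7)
The plan is to adapt the proof of Proposition~\ref{fact:modular_description}, with two adjustments to accommodate the fact that $\mathcal{Y}(2)$ is a stack (rather than a scheme): the map from pairs to coarse moduli points will no longer be injective on the nose, and the equivalence relation is dictated by the generic automorphism group $\{\pm 1\}$, which collapses quadratic twists. The key geometric input is the identification of the $\lambda$-line $\mathbb{P}^{1}\!\setminus\!\{0,1,\infty\}$ with the coarse moduli space of $\mathcal Y(2)$, made totally explicit by the Legendre family $y^{2}=x(x-1)(x-\lambda)$.

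The forward and backward maps are both visible through the Legendre model. Forward: given $(E,\alpha)/K$ with $E$ of good reduction outside $S$, the level-$2$ structure $\alpha$ determines a Legendre parameter $\lambda(E,\alpha)\in K$; the Legendre discriminant equals $16\lambda^{2}(1-\lambda)^{2}$, so good reduction outside $S$ forces $\lambda,1-\lambda\in\mathcal{O}_{K,S}^{\times}$, producing a point of $(\mathbb{P}^{1}\!\setminus\!\{0,1,\infty\})(\mathcal{O}_{K,S})$. Backward: each such $\lambda$ yields the Legendre curve $E_\lambda$ over $\mathcal O_{K,S}$ with its canonical full level-$2$ structure, which plainly has good reduction outside $S$. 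This establishes surjectivity.

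Next, I would analyze the fibres of the forward map. Two pairs $(E,\alpha)$ and $(E',\alpha')$ share a $\lambda$-invariant iff they become isomorphic over $\bar K$, by the defining property of the coarse moduli space at geometric points; the content of the fact is that such a $\bar K$-isomorphism descends to $K_S$. One checks that $\mathrm{Aut}(E,\alpha)=\{\pm 1\}$ uniformly---the hyperelliptic involution acts trivially on $E[2]$, while the extra automorphisms appearing at $j\in\{0,1728\}$ move $E[2]$ and hence fail to preserve $\alpha$---so the twist cocycle comparing $(E,\alpha)$ and $(E',\alpha')$ lives in $H^{1}(\mathrm{Gal}(\bar K/K),\{\pm 1\})\cong K^{\times}/(K^{\times})^{2}$ and corresponds to a quadratic twist by some $d\in K^{\times}$.

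The main obstacle is the descent step: showing that the quadratic character $\sqrt{d}$ is unramified outside $S$. The standard input is that, for $\mathfrak p\notin S$, the twist $E^{(d)}$ has good reduction at $\mathfrak p$ iff $v_{\mathfrak p}(d)$ is even, i.e.\ iff $K(\sqrt d)/K$ is unramified at $\mathfrak p$ (see, e.g., \cite[X.5.4]{SILVERMAN_ELLIPTIC}). Hence if both $E$ and $E'=E^{(d)}$ have good reduction outside $S$, then $K(\sqrt d)\subseteq K_S$, and the $\bar K$-isomorphism $E\to E'$ descends to $K_S$. Combined with the Néron mapping-property argument of Proposition~\ref{fact:modular_description}---now applied after base change to $K_S$ and to the relative $E[2]$ rather than to a rational torsion section---this yields the asserted bijection.
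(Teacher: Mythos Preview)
The paper states Fact~\ref{fact:level2_description} without proof, so there is no argument to compare against; your sketch is essentially the standard one and is sound in outline. One step, however, does not work as written. In the forward direction you assert that ``the Legendre discriminant equals $16\lambda^{2}(1-\lambda)^{2}$, so good reduction outside $S$ forces $\lambda,1-\lambda\in\mathcal O_{K,S}^{\times}$.'' But the given pair $(E,\alpha)$ is in general only a \emph{quadratic twist} of the Legendre curve $E_\lambda$ over $K$ (putting $E$ in Legendre form requires extracting a square root), so the discriminant of the Legendre model says nothing direct about the minimal discriminant of $E$. The conclusion is nonetheless correct: since $E$ has good reduction at each $\mathfrak p\notin S$ with $\mathfrak p\nmid 2$, the invariant $j(E)=j(E_\lambda)=2^{8}(\lambda^{2}-\lambda+1)^{3}/(\lambda(1-\lambda))^{2}$ lies in $\mathcal O_{\mathfrak p}$, and an elementary valuation check on this expression forces $v_{\mathfrak p}(\lambda)=v_{\mathfrak p}(1-\lambda)=0$. (Equivalently, the three nontrivial $2$-torsion points reduce to distinct points on the N\'eron model, so their cross ratio is a unit with unit complement.) You should replace the discriminant sentence by one of these arguments.

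A second point worth making explicit: your N\'eron-model extension of $\alpha$, as well as the equivalence ``$v_{\mathfrak p}(d)$ even $\Leftrightarrow K(\sqrt d)/K$ unramified at $\mathfrak p$'' used in the descent step, require $\mathfrak p\nmid 2$. The statement (and your proof) should therefore carry the hypothesis that all primes above $2$ lie in $S$; this is consistent with the paper's applications, where every table for the $S$-unit equation has $2\in S$, but it is not stated in Fact~\ref{fact:level2_description} itself. With these two adjustments your argument is complete.
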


In practice, two curves having good reduction outside \(S\) become isomorphic over \(K_S\) precisely when their \(j\)-invariants coincide, so enumerating \(S\)-integral points is tantamount to:
\smallskip
\begin{enumerate}
  \item listing the \(j\)-invariants of elliptic curves \(E/K\) (good outside~\(S\)) that admit a full level-\(2\) structure;
  \item for each such \(E\), counting its distinct level-\(2\) structures.
\end{enumerate}

A generic elliptic curve with full level-\(2\) structure admits \(\lvert\mathrm{GL}_{2}(\mathbb{F}_{2})\rvert = 6\) such structures. If \(j(E)=0\) or \(1728\) the automorphism group of \(E\) is larger, and the count drops to \(2\) or \(3\), respectively.

\subsubsection{Division polynomials}
\label{subsection:division_polynomials}
To compute (or count) the \(K\)-rational torsion points of exact order \(N\) on an elliptic curve \(E/K\) one may use the \emph{division–polynomial method}. We recall its essentials.

\begin{definition}
For the short Weierstrass model \(E:y^{2}=x^{3}+Ax+B\) (with \(A,B\in\mathbb{Z}\)) the \emph{division polynomials}
\(\{\psi_{n}\}_{n\ge0}\subset\mathbb{Z}[x,y,A,B]\) are defined recursively by:
\[
\psi_{0}=0,\quad\psi_{1}=1,\quad\psi_{2}=2y,
\]
\[
\psi_{3}=3x^{4}+6Ax^{2}+12Bx-A^{2},
\]
\[
\psi_{4}=4y\bigl(x^{6}+5Ax^{4}+20Bx^{3}-5A^{2}x^{2}-4ABx-8B^{2}-A^{3}\bigr),
\]
and for \(m\ge2\)
\[
  \psi_{2m+1}
    =\psi_{m+2}\,\psi_{m}^{3}-\psi_{m-1}\,\psi_{m+1}^{3},
\qquad
  \psi_{2m}
    =\frac{\psi_{m}}{2y}\bigl(\psi_{m+2}\,\psi_{m-1}^{2}
                              -\psi_{m-2}\,\psi_{m+1}^{2}\bigr)
    \quad(m\ge3).
\]    
\end{definition}

The key property of the $m$'th division polynomials is summarized by the following theorem.
\begin{theorem}
\label{thm:division_poly_roots}
Let \(E:y^{2}=x^{3}+Ax+B\) as above. For each $m\ge 1$:
\begin{enumerate}
\item The roots of \(\psi_{2m-1}\) are the \(x\)-coordinates of the non-trivial points in \(E[2m+1]\).
\item The roots of \(\psi_{2m}/y\) are the \(x\)-coordinates of the points in \(E[2m]\setminus E[2]\); note that \(\psi_{2m}\in y\,\mathbb{Z}[x,A,B]\).
\end{enumerate}
\end{theorem}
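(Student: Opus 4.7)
The plan is to prove the classical fact that for any $P = (x,y) \in E(\overline K)\setminus\{O\}$, one has $P \in E[n]$ if and only if $\psi_n(P) = 0$. From this, the two assertions follow by identifying the way $\psi_n$ factors (or does not) as a multiple of $y$.

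First I would establish, by induction on $n$ using the recursions together with the Weierstrass relation $y^2 = x^3+Ax+B$ (which is used to replace every occurrence of $y^2$), the parity structure
\[
   \psi_{2k-1} \in \mathbb{Z}[x,A,B], \qquad \psi_{2k} \in 2y\cdot\mathbb{Z}[x,A,B].
\]
The listed base cases $\psi_0,\dots,\psi_4$ satisfy this, and the recursions $\psi_{2m+1}=\psi_{m+2}\psi_m^3-\psi_{m-1}\psi_{m+1}^3$ and $\psi_{2m}=\frac{\psi_m}{2y}(\psi_{m+2}\psi_{m-1}^2-\psi_{m-2}\psi_{m+1}^2)$ propagate it, once one tracks parities of the subscripts and cancels an odd power of $y$ in the even case.

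The main algebraic input is the multiplication-by-$n$ formula: there exist $\phi_n\in\mathbb{Z}[x,A,B]$ and $\omega_n$ such that
\[
   [n](x,y) \;=\; \left(\frac{\phi_n(x)}{\psi_n(x,y)^2},\;\frac{\omega_n(x,y)}{\psi_n(x,y)^3}\right), \qquad \phi_n = x\psi_n^2-\psi_{n-1}\psi_{n+1},
\]
with $\phi_n$ and $\psi_n^2$ coprime as polynomials in $x$. This is proven by induction on $n$: after the base cases (using the duplication formula on $E$), the step amounts to reconciling the elliptic addition-law expression for $[n+1]P=[n]P+P$ with the recursive definition of $\psi_{n+1}$. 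Granted the formula, $[n]P=O$ is equivalent to the denominator vanishing without the numerator doing so, i.e.\ $\psi_n(P)=0$. The two parts of the theorem then follow: for odd $n=2m-1$, $\psi_n$ is a polynomial in $x$ alone and its roots are exactly the $x$-coordinates of the non-trivial points of $E[n]$; for even $n=2m$, writing $\psi_n = y\cdot\tilde\psi_n(x)$, the equation $\psi_n(P)=0$ splits as $y(P)=0$ (giving $E[2]$) or $\tilde\psi_n(x(P))=0$, so the roots of $\psi_n/y=\tilde\psi_n$ are the $x$-coordinates of $E[2m]\setminus E[2]$. The main obstacle is the inductive verification of the multiplication formula from the recursions; this is a classical but lengthy computation, and I would cite it from \cite[Ch.~III, Ex.~3.7]{SILVERMAN_ELLIPTIC} rather than reproduce it here.
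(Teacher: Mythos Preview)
The paper does not supply a proof of this theorem: it is stated as a classical fact about division polynomials and then used as a black box for computing $E[N](K)$. Your proposal outlines the standard proof via the multiplication-by-$n$ formula $[n](x,y)=(\phi_n/\psi_n^2,\,\omega_n/\psi_n^3)$ together with the coprimality of $\phi_n$ and $\psi_n^2$, which is exactly the argument one finds in Silverman; there is nothing to compare against, and your sketch is correct.

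One small remark: the statement as printed has a typo in part~(1) --- the roots of $\psi_{2m-1}$ are the $x$-coordinates of $E[2m-1]\setminus\{O\}$, not $E[2m+1]$. Your argument in fact proves this correct version (you write ``for odd $n=2m-1$, \dots\ the non-trivial points of $E[n]$''), so you have implicitly fixed the typo rather than propagated it.
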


To determine the exact order $N$ rational torsion points of $E/K$, compute \(\psi_{N}\) (or, if \(N\) is even, \(\psi_{N}/y\)), and compute its $K$-rational roots to obtain the \(x\)-coordinates of the geometric \(N\)-torsion points. For each rational $x$-coordinate, recover its \(y\)-coordinate pair(s) via the Weierstrass equation, and collect the \(K\)-rational points in \(E[N](K)\). Then, one can verify that a rational $N$-torsion point $P$ has exact order $N$ either by direct computation using the group law, or by demonstrating that for every proper divisor $d|N$, $\psi_d(P)\neq 0$.

\section{Computational Tables}
\label{chapter:tables}
This chapter compiles the numerical output of Algorithm~\ref{algorithm:mcemp} into tables. Each table records, for a fixed moduli problem and a selection of finite sets \(S\) of primes, subject to $\prod_{p\in S}p^2\lesssim 5\cdot 10^5$, two basic statistics:
\begin{itemize}
  \item the running time (in seconds) on the author’s laptop, and
  \item the cardinality of the corresponding set of \(S\)-integral points.
\end{itemize}
In \S~\ref{section:modular_tables} we exhibit output tables for modular curves with $\Gamma_1(N)$ level structure for each $N\in \{4,5,6,7,8,9,10,12\}$, and in \S~\ref{section:s_unit_tables} we exhibit similar data for the $S$-unit equation, with one exception: in Table~\ref{table:s_unit_small}, we also record the $j$-invariants of rational elliptic curves with good reduction outside $S$ admitting full level $2$-structure. 

\subsection{$S$-Unit Equation}
\label{section:s_unit_tables}
The tables below list running times and solution counts for the \(S\)-unit equation \(u+v=1,\;u,v\in\mathbb Z[1/S]^{\times}\), for all sets $S$ containing the prime $2$, subject to $\prod_{p\in S}p^2 \lesssim 5\cdot 10^5$. Table~\ref{table:s_unit_small} below, also records the $j$-invariants of rational elliptic curves with good reduction outside $S$, which admit a full level $2$-structure.

\begin{table}[H]
    \centering
    \caption{Summary of Algorithm Outputs for Solving the \(S\)-unit Equation ($|S| \le 2$)}
    \label{table:s_unit_small}
    \begin{tabular}{@{}lccc@{}}
        \toprule
        \(S\) & Time (sec) & \(j\)-invariants & \(\#\mathbb{P}^1\setminus\{0,1,\infty\}(\mathbb{Z}[1/S])\) \\ 
        \midrule
        \(\{2\}\)       & 0.0165 & \(\{1728\}\) & 3 \\
        \(\{2,3\}\)     & 0.2187  & \(\left\{\frac{35152}{9},\, \frac{1556068}{81},\, \frac{21952}{9},\, 1728\right\}\) & 21 \\
        \(\{2,5\}\)     & 0.1770  & \(\{1728,\, \frac{148176}{25}\}\) & 9 \\
        \(\{2, 7\}\)    & 0.2318 & \(\{1728,\ \frac{740772}{49}\}\) & 9 \\
        \(\{2, 11\}\)   & 0.1513 & \(\{1728\}\)                   & 3 \\
        \(\{2, 13\}\)   & 0.1624 & \(\{1728\}\)                   & 3 \\
        \(\{2, 17\}\)   & 0.1994  & \(\{1728,\ \frac{20346417}{289}\}\) & 9 \\
        \(\{2, 19\}\)   & 0.1872 & \(\{1728\}\)                   & 3 \\
        \(\{2, 23\}\)   & 0.1561 & \(\{1728\}\)                   & 3 \\
        \(\{2, 29\}\)   & 0.1627 & \(\{1728\}\)                   & 3 \\
        \(\{2, 31\}\)   & 0.2107  & \(\{1728,\ \frac{979146657}{3844}\}\) & 9 \\
        \(\{2, 37\}\)   & 0.1719 & \(\{1728\}\)                   & 3 \\
        \(\{2, 41\}\)   & 0.1113  & \(\{1728\}\)                   & 3 \\
        \(\{2, 43\}\)   & 0.1138 & \(\{1728\}\)                   & 3 \\
        \(\{2, 47\}\)   & 0.0731 & \(\{1728\}\)                   & 3 \\
        \(\{2, 53\}\)   & 0.1465 & \(\{1728\}\)                   & 3 \\
        \(\{2, 59\}\)   & 0.1949 & \(\{1728\}\)                   & 3 \\
        \(\{2, 61\}\)   & 0.0817 & \(\{1728\}\)                   & 3 \\
        \(\{2, 67\}\)   & 0.0684 & \(\{1728\}\)                   & 3 \\
        \(\{2, 71\}\)   & 0.1205 & \(\{1728\}\)                   & 3 \\
        \(\{2, 73\}\)   & 0.0812 & \(\{1728\}\)                   & 3 \\
        \(\{2, 79\}\)   & 0.1706 & \(\{1728\}\)                   & 3 \\
        \(\{2, 83\}\)   & 0.1159 & \(\{1728\}\)                   & 3 \\
        \(\{2, 89\}\)   & 0.1331 & \(\{1728\}\)                   & 3 \\
        \(\{2, 97\}\)   & 0.1924  & \(\{1728\}\)                   & 3 \\
        \(\{2, 101\}\)  & 0.1406 & \(\{1728\}\)                   & 3 \\
        \(\{2, 103\}\)  & 0.0624 & \(\{1728\}\)                   & 3 \\
        \(\{2, 107\}\)  & 0.1606 & \(\{1728\}\)                   & 3 \\
        \(\{2, 109\}\)  & 0.1118 & \(\{1728\}\)                   & 3 \\
        \(\{2, 113\}\)  & 0.1166 & \(\{1728\}\)                   & 3 \\
        \bottomrule
    \end{tabular}
\end{table}

\begin{table}[H]
    \centering
    \caption{Summary of Algorithm Outputs for $|S| = 3$}
    \label{tab:algorithm_outputs}
    \begin{tabular}{@{}lcc@{}}
        \toprule
        \(S\) & Time (sec) & \(\#\mathbb{P}^1\setminus\{0,1,\infty\}(\mathbb{Z}[1/S])\) \\ 
        \midrule
        \(\{2, 3, 5\}\)    & 2.6472 & 99 \\
        \(\{2, 3, 7\}\)    & 3.2588 & 75 \\
        \(\{2, 3, 11\}\)   & 2.5991 & 57 \\
        \(\{2, 3, 13\}\)   & 2.4983 & 51 \\
        \(\{2, 3, 17\}\)   & 2.6254 & 51 \\
        \(\{2, 3, 19\}\)   & 2.4208 & 45 \\
        \(\{2, 3, 23\}\)   & 2.3226 & 39 \\
        \(\{2, 5, 7\}\)    & 1.8417 & 33 \\
        \(\{2, 5, 11\}\)   & 1.9373 & 27 \\
        \(\{2, 5, 13\}\)   & 1.8619 & 27 \\
        \(\{2, 5, 17\}\)   & 1.7077 & 21 \\
        \(\{2, 5, 19\}\)   & 1.5106 & 15 \\
        \(\{2, 5, 23\}\)   & 1.3175 & 15 \\
        \(\{2, 7, 11\}\)   & 1.7789 & 21 \\
        \(\{2, 7, 13\}\)   & 1.6810 & 21 \\
        \(\{2, 7, 17\}\)   & 1.5792 & 21 \\
        \bottomrule
    \end{tabular}
\end{table}

\subsection{Modular Curves with $\Gamma_1(N)$-Level Structure}
\label{section:modular_tables}
The following tables list the run time of Algorithm~\ref{algorithm:mcemp}, and the cardinality of $S$-integral points on all curves $\mathcal{Y}_1(N)/\mathbb{Z}[1/S]$, with \(N\in\{4,5,6,7,8,9,10,12\}\), and finite sets $S$ containing all of $N$'s prime divisors, subject to $\prod_{p\in S}p^2\lesssim 5\cdot 10^5$.

\begin{table}[H]
\caption{Summary of Algorithm Outputs for \(N = 4,5\)}
\centering
    \begin{minipage}[t]{0.45\textwidth}
        \centering
        \label{tab:algorithm_outputs1}
        \begin{tabular}{@{}lcc@{}}
            \toprule
            \(S\) & Time (sec) & \(\#\mathcal{Y}_1(4)(\mathbb{Z}[1/S])\) \\ 
            \midrule
            \(\{2\}\)      & 0.0135 & 4 \\
            \(\{2, 3\}\)   & 0.1837 & 32 \\
            \(\{2, 5\}\)   & 0.1735 & 16 \\
            \(\{2, 7\}\)   & 0.1979 & 12 \\
            \(\{2, 11\}\)  & 0.1768 & 4 \\
            \(\{2, 13\}\)  & 0.1401 & 4 \\
            \(\{2, 17\}\)  & 0.2386 & 16 \\
            \(\{2, 19\}\)  & 0.1581 & 4 \\
            \(\{2, 23\}\)  & 0.1416 & 4 \\
            \(\{2, 29\}\)  & 0.1478 & 4 \\
            \(\{2, 31\}\)  & 0.1840 & 12 \\
            \(\{2, 37\}\)  & 0.2043 & 4 \\
            \(\{2, 41\}\)  & 0.1331 & 4 \\
            \(\{2, 43\}\)  & 0.1621 & 4 \\
            \(\{2, 47\}\)  & 0.0852 & 4 \\
            \(\{2, 53\}\)  & 0.1724 & 4 \\
            \(\{2, 59\}\)  & 0.3073 & 4 \\
            \(\{2, 61\}\)  & 0.0713 & 4 \\
            \(\{2, 67\}\)  & 0.0763 & 4 \\
            \(\{2, 71\}\)  & 0.1367 & 4 \\
            \(\{2, 73\}\)  & 0.0933 & 4 \\
            \(\{2, 79\}\)  & 0.2258 & 4 \\
            \(\{2, 83\}\)  & 0.1053 & 4 \\
            \(\{2, 89\}\)  & 0.1347 & 4 \\
            \(\{2, 97\}\)  & 0.1027 & 4 \\
            \(\{2, 101\}\) & 0.1819 & 4 \\
            \(\{2, 103\}\) & 0.0622 & 4 \\
            \(\{2, 107\}\) & 0.1738 & 4 \\
            \(\{2, 109\}\) & 0.1072 & 4 \\
            \(\{2, 113\}\) & 0.1161 & 4 \\
            \(\{2, 3, 5\}\)   & 2.4080 & 156 \\
            \(\{2, 3, 7\}\)   & 2.4869 & 124 \\
            \(\{2, 3, 11\}\)  & 2.4469 & 88  \\
            \(\{2, 3, 13\}\)  & 2.3311 & 88  \\
            \(\{2, 3, 17\}\)  & 2.5106 & 80  \\
            \(\{2, 3, 19\}\)  & 2.2555 & 68  \\
            \(\{2, 3, 23\}\)  & 2.2155 & 60  \\
            \(\{2, 5, 7\}\)   & 1.9311 & 48  \\
            \(\{2, 5, 11\}\)  & 1.7349 & 48  \\
            \(\{2, 5, 13\}\)  & 1.7702 & 44  \\
            \(\{2, 5, 17\}\)  & 1.6994 & 36  \\
            \(\{2, 5, 19\}\)  & 1.3384 & 28  \\
            \(\{2, 5, 23\}\)  & 1.2283 & 24  \\
            \(\{2, 7, 11\}\)  & 1.7588 & 32  \\
            \(\{2, 7, 13\}\)  & 1.7050 & 28  \\
            \(\{2, 7, 17\}\)  & 1.4874 & 32  \\
            \bottomrule
        \end{tabular}
    \end{minipage}\hfill
    \begin{minipage}[t]{0.45\textwidth}
        \centering
        \label{tab:algorithm_outputs4}
        \begin{tabular}{@{}lcc@{}}
            \toprule
            \(S\) & Time (sec) & \(\#\mathcal{Y}_1(5)(\mathbb{Z}[1/S])\) \\ 
            \midrule
            \(\{5\}\)        & 0.0001 & 0 \\
            \(\{2, 5\}\)     & 0.2007 & 8 \\
            \(\{3, 5\}\)     & 0.1420 & 4 \\
            \(\{5, 7\}\)     & 0.0855 & 4 \\
            \(\{5, 11\}\)    & 0.0754 & 8 \\
            \(\{5, 13\}\)    & 0.0730 & 4 \\
            \(\{5, 17\}\)    & 0.0770 & 0 \\
            \(\{5, 19\}\)    & 0.0867 & 0 \\
            \(\{5, 23\}\)    & 0.0418 & 0 \\
            \(\{5, 29\}\)    & 0.0221 & 0 \\
            \(\{5, 31\}\)    & 0.0658 & 4 \\
            \(\{5, 37\}\)    & 0.1051 & 0 \\
            \(\{5, 41\}\)    & 0.0994 & 0 \\
            \(\{5, 43\}\)    & 0.0497 & 0 \\
            \(\{5, 47\}\)    & 0.0313 & 0 \\
            \(\{5, 53\}\)    & 0.0498 & 0 \\
            \(\{5, 59\}\)    & 0.0345 & 0 \\
            \(\{5, 61\}\)    & 0.0634 & 0 \\
            \(\{5, 67\}\)    & 0.0425 & 0 \\
            \(\{5, 71\}\)    & 0.0308 & 0 \\
            \(\{5, 73\}\)    & 0.0200 & 0 \\
            \(\{5, 79\}\)    & 0.1138 & 4 \\
            \(\{5, 83\}\)    & 0.0281 & 0 \\
            \(\{5, 89\}\)    & 0.0415 & 0 \\
            \(\{5, 97\}\)    & 0.0330 & 0 \\
            \(\{5, 101\}\)   & 0.0326 & 0 \\
            \(\{5, 103\}\)   & 0.0132 & 0 \\
            \(\{5, 107\}\)   & 0.0111 & 0 \\
            \(\{5, 109\}\)   & 0.0808 & 0 \\
            \(\{5, 113\}\)   & 0.0272 & 0 \\
            \(\{5, 127\}\)   & 0.0228 & 0 \\
            \(\{5, 131\}\)   & 0.0105 & 0 \\
            \(\{5, 137\}\)   & 0.0330 & 0 \\
            \(\{5, 139\}\)   & 0.0294 & 0 \\
            \(\{2, 3, 5\}\)     & 2.4983 & 20 \\
            \(\{2, 5, 7\}\)     & 1.7387 & 12 \\
            \(\{2, 5, 11\}\)    & 1.6686 & 28 \\
            \(\{2, 5, 13\}\)    & 1.8241 & 12 \\
            \(\{2, 5, 17\}\)    & 1.5994 & 8  \\
            \(\{2, 5, 19\}\)    & 1.3286 & 12 \\
            \(\{2, 5, 23\}\)    & 1.2272 & 8  \\
            \(\{3, 5, 7\}\)     & 0.9254 & 8  \\
            \(\{3, 5, 11\}\)    & 0.9940 & 12 \\
            \(\{3, 5, 13\}\)    & 1.1061 & 8  \\
            \(\{3, 5, 17\}\)    & 0.8919 & 4  \\
            \(\{3, 5, 19\}\)    & 0.9392 & 8  \\
            \(\{3, 5, 23\}\)    & 1.0129 & 4  \\
            \(\{5, 7, 11\}\)    & 0.7529 & 12 \\
            \(\{5, 7, 13\}\)    & 0.8013 & 8  \\
            \(\{5, 7, 17\}\)    & 0.5915 & 4  \\
            \(\{5, 7, 19\}\)    & 0.6266 & 8  \\
            \bottomrule
        \end{tabular}
    \end{minipage}
\end{table}

\begin{table}[H]
\caption{Summary of Algorithm Outputs for \(N = 6, 7, 10, 12\)}
\centering
    \begin{minipage}[t]{0.45\textwidth}
        \centering
        \label{tab:algorithm_outputs2}
        \begin{tabular}{@{}lcc@{}}
            \toprule
            \(S\) & Time (sec) & \(\#\mathcal{Y}_1(6)(\mathbb{Z}[1/S])\) \\ 
            \midrule
            \(\{2, 3\}\)       & 0.1864 & 4  \\
            \(\{2, 3, 5\}\)    & 3.9033 & 44 \\
            \(\{2, 3, 7\}\)    & 2.6826 & 28 \\
            \(\{2, 3, 11\}\)   & 2.5230 & 20 \\
            \(\{2, 3, 13\}\)   & 2.7724 & 12 \\
            \(\{2, 3, 17\}\)   & 2.5240 & 20 \\
            \(\{2, 3, 19\}\)   & 2.4760 & 12 \\
            \(\{2, 3, 23\}\)   & 2.3740 & 12 \\
            \bottomrule
        \end{tabular}
    \vspace*{1em}\par
    \begin{tabular}{@{}lcc@{}}
            \toprule
            \(S\) & Time (sec) & \(\#\mathcal{Y}_1(10)(\mathbb{Z}[1/S])\) \\ 
            \midrule
            \(\{2, 5\}\)        & 0.1568 & 0 \\
            \(\{2, 3, 5\}\)     & 2.5255 & 8 \\
            \(\{2, 5, 7\}\)     & 1.8679 & 0 \\
            \(\{2, 5, 11\}\)    & 1.6716 & 0 \\
            \(\{2, 5, 13\}\)    & 1.7386 & 0 \\
            \(\{2, 5, 17\}\)    & 1.6767 & 0 \\
            \(\{2, 5, 19\}\)    & 1.3491 & 0 \\
            \(\{2, 5, 23\}\)    & 1.2131 & 0 \\
            \bottomrule
    \end{tabular}
    \vspace*{1em}\par
    \begin{tabular}{@{}lcc@{}}
            \toprule
            \(S\) & Time (sec) & \(\#\mathcal{Y}_1(12)(\mathbb{Z}[1/S])\) \\ 
            \midrule
            \(\{2, 3\}\)       & 0.1832 & 0 \\
            \(\{2, 3, 5\}\)    & 2.4209 & 4 \\
            \(\{2, 3, 7\}\)    & 2.6258 & 0 \\
            \(\{2, 3, 11\}\)   & 2.3782 & 0 \\
            \(\{2, 3, 13\}\)   & 2.4077 & 0 \\
            \(\{2, 3, 17\}\)   & 2.3736 & 0 \\
            \(\{2, 3, 19\}\)   & 2.2248 & 0 \\
            \(\{2, 3, 23\}\)   & 2.2322 & 0 \\
            \bottomrule
    \end{tabular}
    \end{minipage}\hfill
    \begin{minipage}[t]{0.45\textwidth}
        \centering
        \label{tab:algorithm_outputs2}
        \begin{tabular}{@{}lcc@{}}
            \toprule
            \(S\) & Time (sec) & \(\#\mathcal{Y}_1(7)(\mathbb{Z}[1/S])\) \\ 
            \midrule
            \(\{7\}\)           & 0.0103 & 0 \\
            \(\{2, 7\}\)        & 0.2185 & 0 \\
            \(\{3, 7\}\)        & 0.1433 & 0 \\
            \(\{5, 7\}\)        & 0.0947 & 0 \\
            \(\{7, 11\}\)       & 0.0968 & 0 \\
            \(\{7, 13\}\)       & 0.0791 & 0 \\
            \(\{7, 17\}\)       & 0.0558 & 0 \\
            \(\{7, 19\}\)       & 0.0475 & 0 \\
            \(\{7, 23\}\)       & 0.0542 & 0 \\
            \(\{7, 29\}\)       & 0.0781 & 0 \\
            \(\{7, 31\}\)       & 0.0295 & 0 \\
            \(\{7, 37\}\)       & 0.0895 & 0 \\
            \(\{7, 41\}\)       & 0.0129 & 0 \\
            \(\{7, 43\}\)       & 0.0362 & 0 \\
            \(\{7, 47\}\)       & 0.0298 & 0 \\
            \(\{7, 53\}\)       & 0.0518 & 0 \\
            \(\{7, 59\}\)       & 0.0199 & 0 \\
            \(\{7, 61\}\)       & 0.0418 & 0 \\
            \(\{7, 67\}\)       & 0.0566 & 0 \\
            \(\{7, 71\}\)       & 0.0800 & 0 \\
            \(\{7, 73\}\)       & 0.0435 & 0 \\
            \(\{7, 79\}\)       & 0.0256 & 0 \\
            \(\{7, 83\}\)       & 0.0289 & 0 \\
            \(\{7, 89\}\)       & 0.0523 & 0 \\
            \(\{7, 97\}\)       & 0.0298 & 0 \\
            \(\{7, 101\}\)      & 0.0517 & 0 \\
            \(\{2, 3, 7\}\)    & 2.4767 & 6 \\
            \(\{2, 5, 7\}\)    & 1.7123 & 6 \\
            \(\{2, 7, 11\}\)    & 1.7302 & 0 \\
            \(\{2, 7, 13\}\)    & 1.5428 & 6 \\
            \(\{2, 7, 17\}\)    & 1.4337 & 0 \\
            \(\{3, 5, 7\}\)     & 0.9462 & 0 \\
            \(\{3, 7, 11\}\)    & 1.0337 & 0 \\
            \(\{3, 7, 13\}\)    & 1.5428 & 0 \\
            \(\{3, 7, 17\}\)    & 0.9534 & 0 \\
            \(\{3, 7, 19\}\)    & 0.9851 & 0 \\
            \(\{5, 7, 11\}\)    & 0.7375 & 0 \\
            \(\{5, 7, 13\}\)    & 0.6591 & 0 \\
            \(\{5, 7, 17\}\)    & 0.6190 & 0 \\
            \(\{5, 7, 19\}\)    & 0.6282 & 0 \\
            \bottomrule
        \end{tabular}
    \end{minipage}
\end{table}

\begin{table}[H]
\caption{Summary of Algorithm Outputs for \(N = 8,9\)}
\centering
    \begin{minipage}[t]{0.45\textwidth}
        \centering
        \label{tab:algorithm_outputs3}
        \begin{tabular}{@{}lcc@{}}
            \toprule
            \(S\) & Time (sec) & \(\#\mathcal{Y}_1(8)(\mathbb{Z}[1/S])\) \\ 
            \midrule
            \(\{2\}\)          & 0.0139 & 0 \\
            \(\{2, 3\}\)       & 0.2318 & 4 \\
            \(\{2, 5\}\)       & 0.2286 & 0 \\
            \(\{2, 7\}\)       & 0.2362 & 0 \\
            \(\{2, 11\}\)      & 0.1692 & 0 \\
            \(\{2, 13\}\)      & 0.1452 & 0 \\
            \(\{2, 17\}\)      & 0.2642 & 0 \\
            \(\{2, 19\}\)      & 0.1971 & 0 \\
            \(\{2, 23\}\)      & 0.1504 & 0 \\
            \(\{2, 29\}\)      & 0.1696 & 0 \\
            \(\{2, 31\}\)      & 0.2143 & 0 \\
            \(\{2, 37\}\)      & 0.3617 & 0 \\
            \(\{2, 41\}\)      & 0.1448 & 0 \\
            \(\{2, 43\}\)      & 0.1462 & 0 \\
            \(\{2, 47\}\)      & 0.0741 & 0 \\
            \(\{2, 53\}\)      & 0.1823 & 0 \\
            \(\{2, 59\}\)      & 0.2329 & 0 \\
            \(\{2, 61\}\)      & 0.1113 & 0 \\
            \(\{2, 67\}\)      & 0.0695 & 0 \\
            \(\{2, 71\}\)      & 0.1356 & 0 \\
            \(\{2, 73\}\)      & 0.1225 & 0 \\
            \(\{2, 79\}\)      & 0.1799 & 0 \\
            \(\{2, 83\}\)      & 0.1212 & 0 \\
            \(\{2, 89\}\)      & 0.1456 & 0 \\
            \(\{2, 97\}\)      & 0.0998 & 0 \\
            \(\{2, 101\}\)     & 0.1411 & 0 \\
            \(\{2, 103\}\)     & 0.0724 & 0 \\
            \(\{2, 107\}\)     & 0.1446 & 0 \\
            \(\{2, 109\}\)     & 0.0860 & 0 \\
            \(\{2, 113\}\)     & 0.0943 & 0 \\
            \(\{2, 3, 5\}\)     & 2.5480 & 8 \\
            \(\{2, 3, 7\}\)     & 2.8088 & 16 \\
            \(\{2, 3, 11\}\)    & 3.1591 & 4 \\
            \(\{2, 3, 13\}\)    & 2.4920 & 4 \\
            \(\{2, 3, 17\}\)    & 2.4770 & 8 \\
            \(\{2, 3, 19\}\)    & 2.2460 & 4 \\
            \(\{2, 3, 23\}\)    & 2.2481 & 4 \\
            \(\{2, 5, 7\}\)     & 1.6977 & 0 \\
            \(\{2, 5, 11\}\)    & 2.7450 & 0 \\
            \(\{2, 5, 13\}\)    & 1.7515 & 0 \\
            \(\{2, 5, 17\}\)    & 1.6006 & 0 \\
            \(\{2, 5, 19\}\)    & 1.3393 & 0 \\
            \(\{2, 5, 23\}\)    & 1.3234 & 0 \\
            \(\{2, 7, 11\}\)    & 1.6221 & 0 \\
            \(\{2, 7, 13\}\)    & 1.5486 & 0 \\
            \(\{2, 7, 17\}\)    & 1.4325 & 0 \\
            \bottomrule
        \end{tabular}
    \end{minipage}\hfill
    \begin{minipage}[t]{0.45\textwidth}
        \centering
        \label{tab:algorithm_outputs2}
        \begin{tabular}{@{}lcc@{}}
            \toprule
            \(S\) & Time (sec) & \(\#\mathcal{Y}_1(9)(\mathbb{Z}[1/S])\) \\ 
            \midrule
            \(\{3\}\)           & 0.0026 & 0 \\
            \(\{2, 3\}\)        & 0.1818 & 6 \\
            \(\{3, 5\}\)        & 0.1111 & 0 \\
            \(\{3, 7\}\)        & 0.1434 & 0 \\
            \(\{3, 11\}\)       & 0.1251 & 0 \\
            \(\{3, 13\}\)       & 0.0720 & 0 \\
            \(\{3, 17\}\)       & 0.1451 & 0 \\
            \(\{3, 19\}\)       & 0.1230 & 0 \\
            \(\{3, 23\}\)       & 0.1664 & 0 \\
            \(\{3, 29\}\)       & 0.0246 & 0 \\
            \(\{3, 31\}\)       & 0.0280 & 0 \\
            \(\{3, 37\}\)       & 0.1279 & 0 \\
            \(\{3, 41\}\)       & 0.0643 & 0 \\
            \(\{3, 43\}\)       & 0.1156 & 0 \\
            \(\{3, 47\}\)       & 0.1360 & 0 \\
            \(\{3, 53\}\)       & 0.0469 & 0 \\
            \(\{3, 59\}\)       & 0.0270 & 0 \\
            \(\{3, 61\}\)       & 0.0845 & 0 \\
            \(\{3, 67\}\)       & 0.1113 & 0 \\
            \(\{3, 71\}\)       & 0.0645 & 0 \\
            \(\{3, 73\}\)       & 0.1508 & 0 \\
            \(\{3, 79\}\)       & 0.0467 & 0 \\
            \(\{3, 83\}\)       & 0.0827 & 0 \\
            \(\{3, 89\}\)       & 0.1121 & 0 \\
            \(\{3, 97\}\)       & 0.1302 & 0 \\
            \(\{3, 101\}\)      & 0.0640 & 0 \\
            \(\{3, 103\}\)      & 0.0493 & 0 \\
            \(\{3, 107\}\)      & 0.0279 & 0 \\
            \(\{3, 109\}\)      & 0.1035 & 0 \\
            \(\{3, 113\}\)      & 0.0972 & 0 \\
            \(\{3, 127\}\)      & 0.0557 & 0 \\
            \(\{3, 131\}\)      & 0.0374 & 0 \\
            \(\{2, 3, 5\}\)     & 2.5539 & 6 \\
            \(\{2, 3, 7\}\)     & 2.5107 & 6 \\
            \(\{2, 3, 11\}\)    & 2.4462 & 6 \\
            \(\{2, 3, 13\}\)    & 2.3373 & 6 \\
            \(\{2, 3, 17\}\)    & 2.4355 & 6 \\
            \(\{2, 3, 19\}\)    & 2.3464 & 6 \\
            \(\{2, 3, 23\}\)    & 3.0774 & 6 \\
            \(\{3, 5, 7\}\)     & 0.9582 & 0 \\
            \(\{3, 5, 11\}\)    & 0.9134 & 0 \\
            \(\{3, 5, 13\}\)    & 1.3156 & 0 \\
            \(\{3, 5, 17\}\)    & 1.0217 & 0 \\
            \(\{3, 5, 19\}\)    & 1.0349 & 0 \\
            \(\{3, 5, 23\}\)    & 1.0629 & 0 \\
            \(\{3, 7, 11\}\)    & 1.0287 & 0 \\
            \(\{3, 7, 13\}\)    & 0.8835 & 0 \\
            \(\{3, 7, 17\}\)    & 1.1557 & 0 \\
            \(\{3, 7, 19\}\)    & 0.9422 & 0 \\
            \bottomrule
        \end{tabular}
    \end{minipage}
\end{table}

\printbibliography
\end{document}